\documentclass[10pt]{article} 

\usepackage[dvipsnames]{xcolor}

\usepackage[preprint]{tmlr}


\usepackage{amsmath,amsfonts,bm}









\def\eqref#1{equation~\ref{#1}}









\def\1{\bm{1}}










\DeclareMathAlphabet{\mathsfit}{\encodingdefault}{\sfdefault}{m}{sl}
\SetMathAlphabet{\mathsfit}{bold}{\encodingdefault}{\sfdefault}{bx}{n}













\usepackage{hyperref}
\usepackage{url}

\usepackage{graphicx}
\usepackage{subcaption}
\usepackage{amssymb}
\usepackage{amsthm}
\usepackage{enumitem}

\newtheorem{theorem}{Theorem}

\newtheorem{lemma}{Lemma}
\newtheorem{corollary}{Corollary}
\newtheorem{definition}{Definition}

\newtheorem{example}{Example}

\title{Are Convex Optimization Curves Convex?}


\author{\name Guy Barzilai \email guybarzilai1@mail.tau.ac.il \\
      \addr Tel-Aviv University
      \AND
      \name Ohad Shamir \email ohad.shamir@weizmann.ac.il \\
      \addr Weizmann Institute of Science
      \AND
      \name Moslem Zamani \email moslem.zamani@uclouvain.be \\
      \addr UCLouvain}



\newcommand{\inner}[2]{\left\langle #1, #2 \right\rangle}
\newcommand{\grad}[1]{\nabla#1}
\newcommand{\norm}[1]{\left\lVert#1\right\rVert}

\begin{document}

\maketitle

\begin{abstract}
In this paper, we study when we might expect the optimization curve induced by gradient descent to be \emph{convex} -- precluding, for example, an initial plateau followed by a sharp decrease, making it difficult to decide when optimization should stop. Although such undesirable behavior can certainly occur when optimizing general functions, might it also occur in the benign and well-studied case of smooth convex functions? As far as we know, this question has not been tackled in previous work. We show, perhaps surprisingly, that the answer crucially depends on the choice of the step size. In particular, for the range of step sizes which are known to result in monotonic convergence to an optimal value, we characterize a regime where the optimization curve will be provably convex, and a regime where the curve can be non-convex. We also extend our results to gradient flow, and to the closely-related but different question of whether the gradient norm decreases monotonically.
\end{abstract}

\section{Introduction}

We consider the well-known gradient descent algorithm with constant step-size, which given a function \(f:\mathbb{R}^n\to\mathbb{R}\), an initial point \(x_0\in\mathbb{R}^n\) and step-size parameter \(\eta>0\), generates a sequence of points \(\{x_n\}^{\infty}_{n=0}\) following the recurrence relation
\[
\forall n\in\mathbb{N}: x_n=x_{n-1}-\eta\grad{f}(x_{n-1})~.
\] 
This sequence of points induces an \emph{optimization curve}, which is the linear interpolation of $\left\{(n,f(x_n))\right\}^{\infty}_{n=0}$ in $\mathbb{R}^2$ (see Figure \ref{fig:gd_comparison} for an example). 

Our motivating question is to understand the properties of this optimization curve. For example, if the function $f$ is convex and smooth, then choosing the step size $\eta$ appropriately, it is well-known that the curve decreases monotonically to the minimal value of $f$ \citep{nesterov2018lectures}. However, other properties might also be of interest. For example, an undesirable phenomenon concerning the optimization curve is when it plateaus for a while, and after that substantially decreases. This phenomenon is undesirable, since it can cause the user to terminate optimization prematurely,  thinking that a minimum has been reached, where in fact continuing the optimization would have resulted in a significant improvement. An important property that disallows this undesirable phenomenon is \emph{convexity} of the optimization curve: Namely, that the slope of the curve (or equivalently, $\left\{f(x_{n})-f(x_{n+1})\right\}^{\infty}_{n=0}$) decreases monotonically. Moreover, convexity of the optimization curve occurs very often in practice, in a wide variety of problems. However, to the best of our knowledge, no previous work attempted to rigorously study when we might expect the optimization curve to be convex. 

\begin{figure}[h]
    \centering
    \begin{subfigure}{0.49\textwidth}
        \centering
        \includegraphics[width=\linewidth]{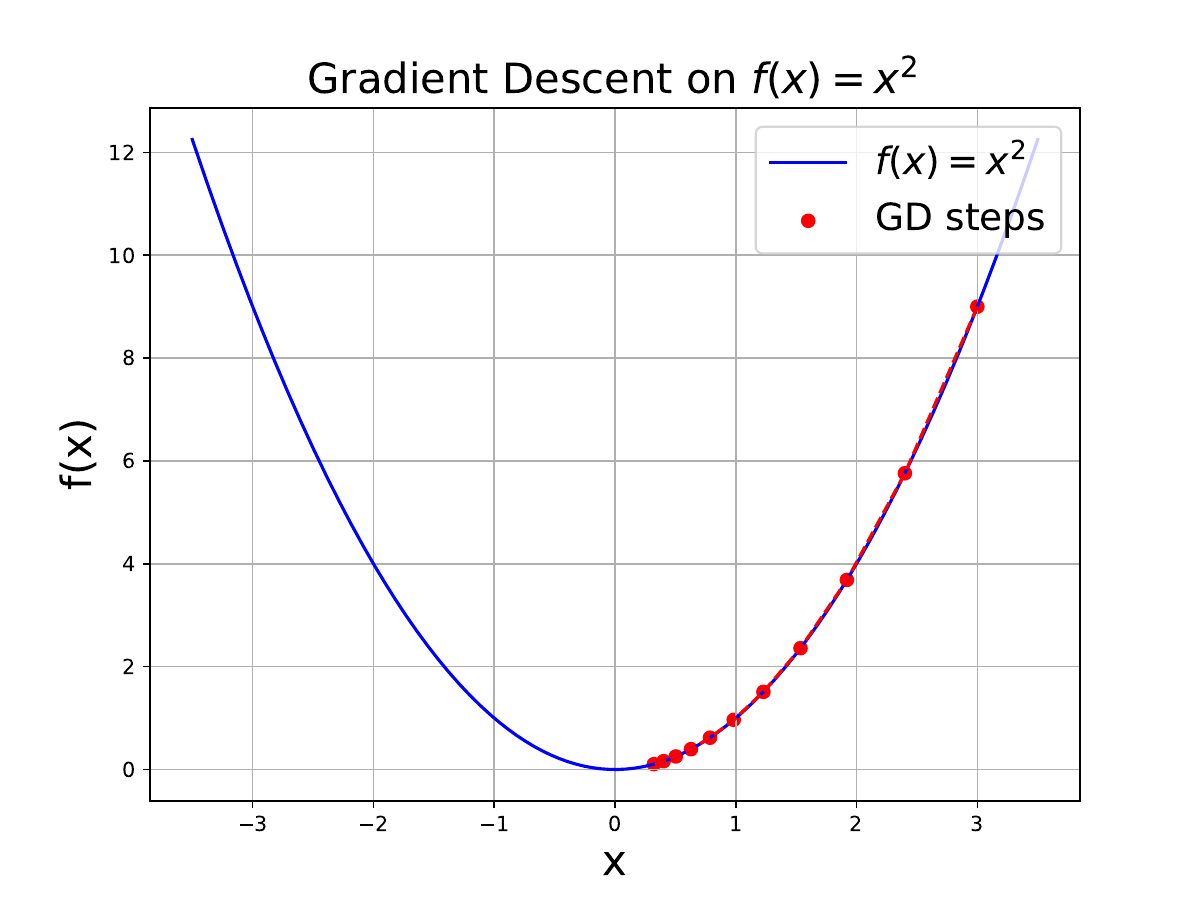}
        \label{fig:gd}
    \end{subfigure}
    \hfill
    \begin{subfigure}{0.49\textwidth}
        \centering
        \includegraphics[width=\linewidth]{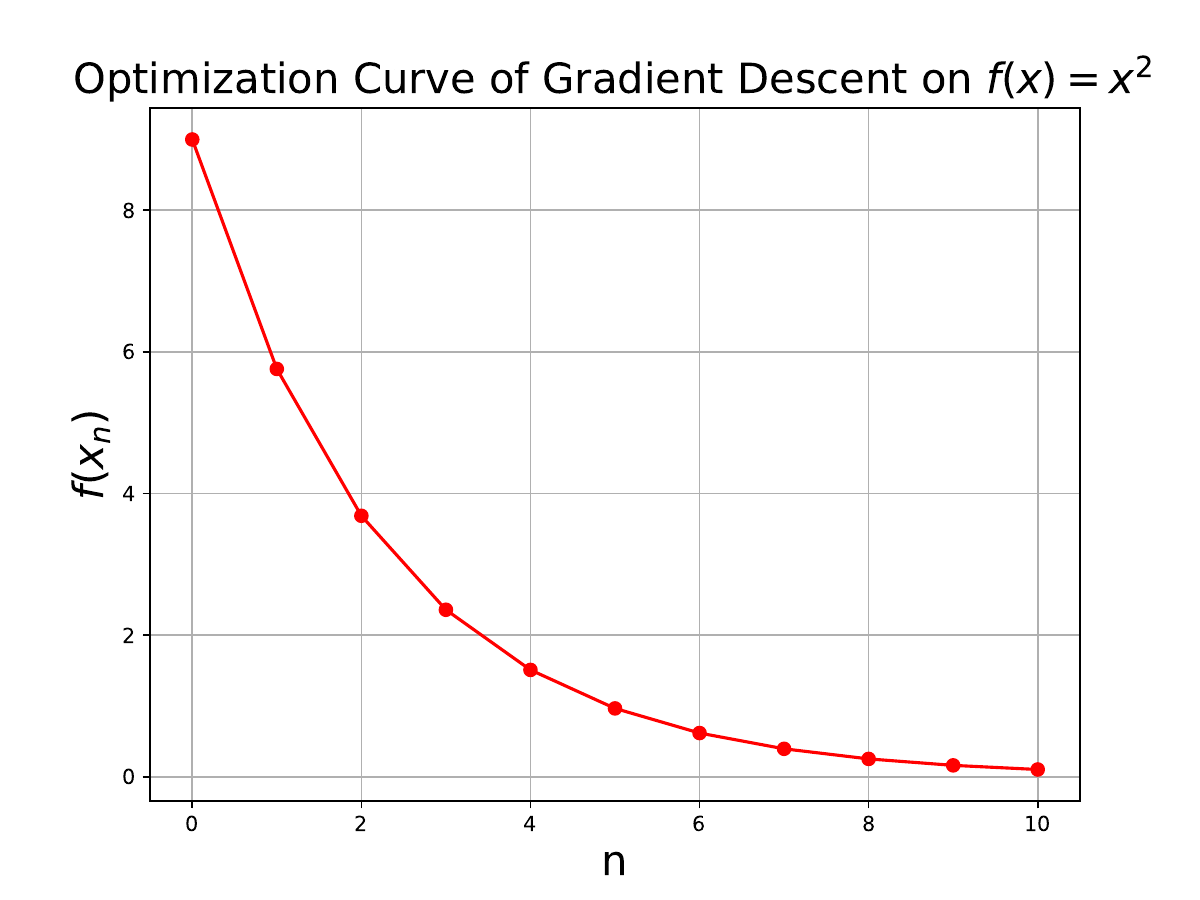}
        \label{fig:gd_oc}
    \end{subfigure}
    \caption{Gradient descent on \(f(x)=x^2\) with step-size \(\eta=0.1\) and initial point \(x_0=3\).}
    \label{fig:gd_comparison}
\end{figure}

To begin this investigation, we first note that for general non-convex functions, we have no hope of guaranteeing a convex optimization curve. For instance, if we initialize gradient descent close to a maximum of a smooth locally concave function, it is easily seen that the optimization rate will increase over time, leading to a concave optimization curve. Moreover, even if we consider the more benign case of convex functions, it is well-known that the optimization curve of gradient descent may not  monotonically decrease in general (and certainly not be convex), as shown in the following simple example: 
\begin{example}
    Consider the convex $2$-Lipschitz function 
    \[
    f(x) = |x|+\max\{0,x\}
    \]
    on $\mathbb{R}$, which is minimized at $0$. For any step size $\eta>0$, suppose we initialize gradient descent at $x_0=-\frac{\eta}{4}$. By definition of gradient descent, we have
    \[
    x_1=x_0-\eta f'(x_0)=-\frac{\eta}{4}+\eta=\frac{3}{4}\eta~~~\text{and}~~~
    x_2=x_1-\eta f'(x_1)=\frac{3}{4}\eta-2\eta=-\frac{5}{4}\eta~,
    \]
    hence \(f(x_0)=\frac{\eta}{4},\;\;f(x_1)=\frac{3}{2}\eta,\;\;f(x_2)=\frac{5}{4}\eta\). Therefore, the optimization curve first increases and then decreases, and thus is neither convex nor monotonically decreasing. 
\end{example}

This observation motivates us in focusing on the class of \(L\)-smooth convex functions, where the gradient is $L$-Lipschitz. This is an extremely well-studied class for gradient descent procedures, and in particular, it is well-known that for any $\eta\in (0,\frac{2}{L})$, the optimization curve of gradient descent will monotonically converge to the minimal value (whereas for larger value of $\eta$, gradient descent may diverge; see \citet[Theorem 2.1.14]{nesterov2018lectures}). Thus, we will be interested in whether the optimization curve of gradient descent is convex, when optimizing such functions and in this step size regime. As far as we know, this question has not been addressed in previous literature.

In this paper, we provide an answer to this question, which is (perhaps unexpectedly) somewhat subtle: On the one hand, for any $\eta\in (0,\frac{1.75}{L}]$ (which includes the worst-case optimal step size $1/L$), we prove that the optimization curve is indeed convex. On the other hand, we prove that for $\eta\in (\frac{1.75}{L},\frac{2}{L})$ the optimization curve may not be convex -- even though the curve is still monotonically decreasing and gradient descent is guaranteed to converge to a minimal value. In other words, there exist step sizes that ensure monotonic convergence of gradient descent, but do not ensure convexity of the induced optimization curve. 

As another contribution, we consider a closely-related but different question: Namely, whether the norm of the gradients \(\left\{\norm{\grad{f}(x_n)}\right\}_{n=0}^\infty\) monotonically decrease, for $L$-smooth convex functions. This question is related, as the gradient captures the local slope of the function, which correlates with the magnitude of the objective function decrease after a single gradient step, assuming smoothness and that the step is small enough (in fact, this intuition is used more formally in our proofs). Although one might suspect that the behavior of gradient norm decay will be similar to the question of optimization curve convexity, we prove that the answer here is different and more positive: The gradient norm is decreasing, for \emph{any} $\eta\in (0,\frac{2}{L}]$. Finally, we extend our analysis to gradient flow (the continuous-time analogue of gradient descent), and prove that for smooth convex functions, the gradient flow optimization curve is always convex, and that the gradient norm is monotonically decreasing.

\section{Preliminaries}

We will need the following simple lemma, which essentially states that the integral of a non-decreasing function is convex:
\begin{lemma}
\label{lem:int_convexity}
    Let \(g:\mathbb{R}\to\mathbb{R}\) be a Riemann integrable function on every closed sub-interval of \(\mathbb{R}\).
    Define \(f:\mathbb{R}\to\mathbb{R}\) such that for every \(x\in\mathbb{R}\):
    \[f(x)=\int^{x}_{0}g(t)dt.\]
    If \(g\) is non-decreasing, then \(f\) is convex.
\end{lemma}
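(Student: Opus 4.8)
My plan is to prove convexity of $f$ directly from the definition: I need to show that for any $x < y$ in $\mathbb{R}$ and any $\lambda \in [0,1]$, writing $z = \lambda x + (1-\lambda) y$, we have $f(z) \le \lambda f(x) + (1-\lambda) f(y)$. Equivalently, since $z$ lies between $x$ and $y$, it suffices to establish the "slopes increase" form of convexity: for any $a < b < c$,
\[
\frac{f(b) - f(a)}{b - a} \;\le\; \frac{f(c) - f(b)}{c - b}.
\]
This is the cleanest route because it reduces everything to comparing two averages of $g$ over adjacent intervals.

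The key step is to rewrite each difference quotient as an average of $g$. By the definition of $f$ as an integral from $0$, for $a < b$ we have $f(b) - f(a) = \int_a^b g(t)\,dt$ (the base point $0$ cancels). Hence
\[
\frac{f(b) - f(a)}{b - a} \;=\; \frac{1}{b-a}\int_a^b g(t)\,dt,
\qquad
\frac{f(c) - f(b)}{c - b} \;=\; \frac{1}{c-b}\int_b^c g(t)\,dt.
\]
Now I invoke monotonicity of $g$: for every $t \in [a,b]$ and every $s \in [b,c]$ we have $t \le b \le s$, so $g(t) \le g(b) \le g(s)$. Integrating the bound $g(t) \le g(b)$ over $[a,b]$ and dividing by $b-a$ gives that the left average is at most $g(b)$; integrating $g(b) \le g(s)$ over $[b,c]$ and dividing by $c-b$ gives that the right average is at least $g(b)$. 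Chaining these two inequalities through $g(b)$ yields exactly the desired comparison of difference quotients.

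Finally I would note that this three-point slope inequality is equivalent to convexity of $f$ on $\mathbb{R}$: given $x < y$ and $\lambda\in(0,1)$, set $a = x$, $b = \lambda x + (1-\lambda)y$, $c = y$, and the inequality above rearranges to $f(b) \le \lambda f(x) + (1-\lambda) f(y)$ after clearing the (positive) denominators $b - a = (1-\lambda)(y-x)$ and $c - b = \lambda(y-x)$. The Riemann integrability hypothesis is used only to ensure all the integrals above are well-defined and that the elementary monotonicity bound on integrals applies; no real obstacle arises there. The one place to be slightly careful is the trivial edge cases where $a = b$ or $b = c$, i.e.\ $\lambda \in \{0,1\}$, but there the convexity inequality holds with equality and needs no argument. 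I do not anticipate a genuine obstacle in this lemma; the only "content" is the observation that an average of a monotone function over an interval is squeezed between its values at the endpoints, and that adjacent intervals share the endpoint $b$ at which $g(b)$ separates the two averages.
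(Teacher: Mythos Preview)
Your proposal is correct and follows essentially the same approach as the paper: both argue via the three-point slope inequality, bounding each difference quotient by the value of $g$ at the shared endpoint (your $g(b)$, the paper's $g(u)$) using monotonicity of $g$ and the elementary integral bound. The arguments are virtually identical up to notation.
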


\begin{proof}
    Let \(s<u<t\) be real numbers. By definition of convexity, it is enough to show that
    \begin{equation}
    \label{conv_char}
        \frac{f(u)-f(s)}{u-s}\le\frac{f(t)-f(u)}{t-u}
    \end{equation}
    We note that by definition of \(f\), it holds that
    \(f(u)-f(s)  =\int^{u}_{s}g(x)dx\), 
    and by the monotonicity of \(g\), it therefore holds that
    \(
        (u-s)\cdot g(s)\le f(u)-f(s) \le (u-s)\cdot g(u)
    \). Thus, 
    \begin{equation}
    \label{for_conv_char_1} 
        g(s) \le \frac{f(u)-f(s)}{u-s} \le g(u)~~~\text{and similarly}~~~
         g(u) \le \frac{f(t)-f(u)}{u-t} \le g(t)~.
    \end{equation}
    Obviously, (\ref{for_conv_char_1}) implies (\ref{conv_char}), completing the proof.
\end{proof}

As discussed in the introduction, we consider an optimization curve induced $\{f(x_n)\}_{n=0}^{\infty}$ to be convex, if the curve formed from the linear interpolation of the discrete points $\left\{(n,f(x_n))\right\}^{\infty}_{n=0}$ is convex (in the standard sense, e.g. its epigraph is a convex set). This is formalized in the following definition: 
\begin{definition}
    Let \(\{a_n\}_{n=0}^{\infty}\) be a sequence of real numbers. Then we say that the mapping \(n\mapsto a_n\) is convex over \(\mathbb{Z}_{\geq0}\), if the function \(g:[0,\infty)\to\mathbb{R}\) defined as 
    \[g(t)=a_{\lfloor t \rfloor} + (t - \lfloor t \rfloor)(a_{\lfloor t \rfloor + 1} - a_{\lfloor t \rfloor})\]
    is convex.
\end{definition}

By Lemma~\ref{lem:int_convexity} and the Newton-Leibniz theorem (as appears, for instance, in \citet[Theorem 7.3.1]{bartle2011real}), we see that a real valued sequence \(\{a_n\}_{n=0}^{\infty}\) is convex over \(\mathbb{Z}_{\geq0}\) if and only if the sequence \(\{a_{n}-a_{n+1}\}_{n=0}^{\infty}\) is non-increasing (actually, the ``only if'' part follows from elementary properties of convex functions). It shall often be convenient to use the latter characterization rather than the former.

\section{Gradient Descent}

In this section, we shall investigate the optimization curves induced by gradient descent on convex \(L\)-smooth functions. For such functions, it is well-known that for any $\eta\in (0,\frac{2}{L})$, the gradient descent optimization curve monotonically decreases to the minimal function value (see \citet[Theorem 2.1.14]{nesterov2018lectures}). In the following theorem, we prove that in the more restricted regime $\eta\in (0,\frac{1.75}{L}]$, the optimization curve is also convex: 
\begin{theorem}
\label{thm:main}
    Let \(f:\mathbb{R}^n\to\mathbb{R}\) be a convex \(L\)-smooth function,  and let $\{x_n\}_{n=0}^{\infty}$ be the sequence of iterates produced by gradient descent, starting from some $x_0\in\mathbb{R}^n$, using a step size $\eta\in (0,\frac{1.75}{L}]$. Then the mapping \(n\mapsto f(x_n)\) is convex (equivalently, the sequence \(\left\{f(x_{n})-f(x_{n+1})\right\}_{n=0}^\infty\) is non-increasing).
\end{theorem}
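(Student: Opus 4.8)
The plan is to reduce the problem to showing the scalar inequality $f(x_{n-1}) - f(x_n) \geq f(x_n) - f(x_{n+1})$ for all $n$, using the characterization from the preliminaries. Fix a single step and write $x = x_{n-1}$, $y = x_n = x - \eta \nabla f(x)$, $z = x_{n+1} = y - \eta \nabla f(y)$. I want to prove $f(x) + f(z) \geq 2f(y)$. The natural tool is the standard interpolation inequality for $L$-smooth convex functions: for any two points $a, b$,
\[
f(b) \geq f(a) + \langle \nabla f(a), b - a\rangle + \frac{1}{2L}\norm{\nabla f(a) - \nabla f(b)}^2.
\]
I would apply this inequality (and its symmetric counterpart) to several ordered pairs among $\{x, y, z\}$ — in particular $(x,y)$, $(y,x)$, $(y,z)$, $(z,y)$, and possibly $(x,z)$ and $(z,x)$ — then take a nonnegative linear combination whose left-hand side telescopes to $f(x) + f(z) - 2f(y)$ and whose right-hand side, after substituting the gradient-descent relations $x - y = \eta \nabla f(x)$ and $y - z = \eta \nabla f(y)$, becomes a quadratic form in $\nabla f(x)$ and $\nabla f(y)$ (really in the three quantities $\norm{\nabla f(x)}^2$, $\norm{\nabla f(y)}^2$, $\langle \nabla f(x), \nabla f(y)\rangle$) that must be shown nonnegative.

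Concretely, after choosing multipliers I expect the claim to boil down to: the symmetric $2\times 2$ matrix governing the quadratic form in $(\norm{\nabla f(x)}, \norm{\nabla f(y)})$-type variables is positive semidefinite precisely when $\eta \leq 1.75/L$. Rescaling so that $L = 1$ (replace $f$ by $f$ and absorb $L$ into $\eta$), I'd set $g_0 = \nabla f(x)$, $g_1 = \nabla f(y)$ and aim to show something of the form
\[
f(x) + f(z) - 2f(y) \geq \alpha \norm{g_0}^2 + \beta \langle g_0, g_1\rangle + \gamma \norm{g_1}^2 \geq 0
\]
where $\alpha, \beta, \gamma$ are explicit functions of $\eta$ coming from the combination of interpolation inequalities, and the second inequality holds (via $4\alpha\gamma \geq \beta^2$ plus $\alpha, \gamma \geq 0$, using Cauchy–Schwarz to handle the cross term's sign) exactly for $\eta \in (0, 1.75/L]$. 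The number $1.75 = 7/4$ should emerge as the threshold where the discriminant condition fails; this is the computation that pins down the constant and is presumably why the theorem is stated with $1.75$ rather than $2$.

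The main obstacle is finding the right nonnegative combination of interpolation inequalities — this is essentially a small semidefinite-programming / sum-of-squares feasibility problem, and guessing the correct multipliers by hand is delicate. I would approach it by first trying the "obvious" combination using only the four inequalities on adjacent pairs $(x,y), (y,x), (y,z), (z,y)$; if that yields a threshold worse than $1.75/L$, I would add the long-range pair $(x,z), (z,x)$ with its own multiplier and re-optimize. A secondary subtlety is the cross term $\langle g_0, g_1\rangle$, which can have either sign; handling it requires that the quadratic form be PSD as a genuine two-variable form, not merely nonnegative on the positive quadrant, so Cauchy–Schwarz alone is not enough and the discriminant bound is essential. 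Once the single-step inequality $f(x) + f(z) \geq 2f(y)$ is established for the given $\eta$-range, Lemma~\ref{lem:int_convexity} (via the stated equivalence for sequences) immediately gives convexity of $n \mapsto f(x_n)$, completing the proof.
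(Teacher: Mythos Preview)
Your high-level plan matches the paper's: combine the interpolation inequalities for convex $L$-smooth functions with nonnegative multipliers so that the left side becomes $f(x_0)-2f(x_1)+f(x_2)$, then verify the right side is nonnegative. However, there is a concrete gap in the execution you describe. You assert that after substituting the gradient-descent relations the right-hand side becomes a quadratic form in $g_0=\nabla f(x)$ and $g_1=\nabla f(y)$ alone, governed by a $2\times 2$ matrix whose PSD/discriminant condition should produce the constant $7/4$. This is not what happens: every interpolation inequality having $z$ as one of its two points carries a term $\tfrac{1}{2L}\norm{\nabla f(z)-\cdot}^2$, so the third gradient $g_2=\nabla f(z)$ enters the right-hand side and does not reduce to $g_0,g_1$ via the update rule. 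If you simply discard those nonnegative $g_2$-terms (the only way to force a two-variable form), the best you will obtain with the natural multipliers is the weaker threshold $\eta\le \tfrac{3}{2L}$, not $\tfrac{7}{4L}$.

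The paper uses exactly three inequalities---for the pairs $(x_1,x_0)$, $(x_1,x_2)$, $(x_0,x_2)$---with multipliers $\tfrac{3}{2},\tfrac{1}{2},\tfrac{1}{2}$, and the step you are missing is a completion of the square \emph{in $g_2$}: the elementary identity
\[
-\tfrac{1}{2}\norm{g_1-g_0}^2+\norm{g_2-g_1}^2+\norm{g_2-g_0}^2 \;=\; 2\,\norm{g_2-\tfrac{1}{2}g_1-\tfrac{1}{2}g_0}^2
\]
lets one rewrite the right-hand side as $\bigl(\tfrac{7}{8L}-\tfrac{\eta}{2}\bigr)\norm{g_1-g_0}^2$ plus a manifestly nonnegative square involving $g_2$. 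The extra $\tfrac{1}{8L}\norm{g_1-g_0}^2$ harvested from the $g_2$-terms is precisely what moves the threshold from $3/(2L)$ to $7/(4L)$. So the correct picture is a three-gradient sum-of-squares certificate, not a $2\times 2$ PSD test, and no discriminant or Cauchy--Schwarz argument on $(g_0,g_1)$ alone will reach the sharp constant (which Theorem~\ref{thm:non_conv_regime} shows is tight).
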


\begin{proof}
    Arguing by induction and using the fact that $x_0$ is arbitrary, it suffices to prove that
    \begin{equation}
    \label{main_desired}
        f(x_2)-f(x_1)\geq f(x_1)-f(x_0)~.
    \end{equation}

    By convexity and by $L$-smoothness of $f$, we have (see \citet[Theorem 2.1.5]{nesterov2018lectures}):
    \[
     \begin{split}
     &f(x_0)-f(x_1)\ge\inner{\grad{f}\left(x_1\right)}{x_0-x_1}+\frac{1}{2L}\norm{\grad{f}\left(x_1\right)-\grad{f}\left(x_0\right)}^2~,\\
     &f(x_2)-f(x_1)\ge\inner{\grad{f}\left(x_1\right)}{x_2-x_1}+\frac{1}{2L}\norm{\grad{f}\left(x_2\right)-\grad{f}\left(x_1\right)}^2~,\\
     &f(x_2)-f(x_0)\ge\inner{\grad{f}\left(x_0\right)}{x_2-x_0}+\frac{1}{2L}\norm{\grad{f}\left(x_2\right)-\grad{f}\left(x_0\right)}^2~.
     \end{split}
    \]

    Due to gradient descent update rule, we have \(x_1=x_0-\eta\grad{f}\left(x_0\right)\) and \(x_2=x_0-\eta\grad{f}\left(x_0\right)-\eta\grad{f}\left(x_1\right)~.\) Substituting these values into the inequalities, we obtain:
    \[
     \begin{split}
     &f(x_0)-f(x_1)\ge\eta\inner{\grad{f}\left(x_1\right)}{\grad{f}\left(x_0\right)}+\frac{1}{2L}\norm{\grad{f}\left(x_1\right)-\grad{f}\left(x_0\right)}^2~,\\
     &f(x_2)-f(x_1)\ge-\eta\norm{\grad{f}\left(x_1\right)}^2+\frac{1}{2L}\norm{\grad{f}\left(x_2\right)-\grad{f}\left(x_1\right)}^2~,\\
     &f(x_2)-f(x_0)\ge-\eta\norm{\grad{f}\left(x_0\right)}^2-\eta\inner{\grad{f}\left(x_0\right)}{\grad{f}\left(x_1\right)}+\frac{1}{2L}\norm{\grad{f}\left(x_2\right)-\grad{f}\left(x_0\right)}^2~.
     \end{split}
    \]

    Multiplying the above inequalities by $\frac{3}{2}$, $\frac{1}{2}$ and $\frac{1}{2}$ respectively and summing them up, we obtain:
    \begin{align}
    f(x_2)-f(x_1)- \left(f(x_1)-f(x_0)\right)\ge&\frac{3\eta}{2}\inner{\grad{f}\left(x_1\right)}{\grad{f}\left(x_0\right)}+\frac{3}{4L}\norm{\grad{f}\left(x_1\right)-\grad{f}\left(x_0\right)}^2\notag\\
    &-\frac{\eta}{2}\norm{\grad{f}\left(x_1\right)}^2+\frac{1}{4L}\norm{\grad{f}\left(x_2\right)-\grad{f}\left(x_1\right)}^2\notag\\
    &-\frac{\eta}{2}\norm{\grad{f}\left(x_0\right)}^2-\frac{\eta}{2}\inner{\grad{f}\left(x_0\right)}{\grad{f}\left(x_1\right)}\notag\\
    &+\frac{1}{4L}\norm{\grad{f}\left(x_2\right)-\grad{f}\left(x_0\right)}^2~.\label{raw_ineq}
    \end{align}
    Rearranging terms, the right-hand side equals
    \begin{align*}
    &\left(\frac{3}{4L}-\frac{\eta}{2}\right)\norm{\grad{f}\left(x_1\right)-\grad{f}\left(x_0\right)}^2+\frac{1}{4L}\norm{\grad{f}\left(x_2\right)-\grad{f}\left(x_1\right)}^2
    +\frac{1}{4L}\norm{\grad{f}\left(x_2\right)-\grad{f}\left(x_0\right)}^2\\
    &=~
    \left(\frac{7}{8L}-\frac{\eta}{2}\right)\norm{\grad{f}\left(x_1\right)-\grad{f}\left(x_0\right)}^2\\
    &~~~~~~~~+\frac{1}{4L}\left(-\frac{1}{2}\norm{\grad{f}\left(x_1\right)-\grad{f}\left(x_0\right)}^2+
    \norm{\grad{f}\left(x_2\right)-\grad{f}\left(x_1\right)}^2
    +\norm{\grad{f}\left(x_2\right)-\grad{f}\left(x_0\right)}^2\right)\\
    &=~
    \left(\frac{7}{8L}-\frac{\eta}{2}\right)\norm{\grad{f}\left(x_1\right)-\grad{f}\left(x_0\right)}^2+\frac{1}{2L}\norm{\grad{f}\left(x_2\right)-\frac{1}{2}\grad{f}\left(x_1\right)-\frac{1}{2}\grad{f}\left(x_0\right)}^2~,
    \end{align*}
    where the last transition is by the elementary equality $-\frac{1}{2}\norm{b-a}^2+\norm{c-b}^2+\norm{c-a}^2 = 2\norm{c-\frac{1}{2}b-\frac{1}{2}a}^2$, which can be easily verified via expansion. Overall, we have shown that
    \[
    \begin{split}
    f(x_2)-f(x_1)- \left(f(x_1)-f(x_0)\right)~\ge&\left(\frac{7}{8L}-\frac{\eta}{2}\right)\norm{\grad{f}\left(x_1\right)-\grad{f}\left(x_0\right)}^2\\
    &+\frac{1}{2L}\norm{\grad{f}\left(x_2\right)-\frac{1}{2}\grad{f}\left(x_1\right)-\frac{1}{2}\grad{f}\left(x_0\right)}^2~.
    \end{split}
    \]
    The first term is non-negative (since we assume $\eta\leq \frac{7}{4L}=\frac{1.75}{L}$), and so is the second term. This implies (\ref{main_desired}) and concludes the proof.
\end{proof}

We note that the theorem applies for step sizes in the range $(0,\frac{1.75}{L}]$, which is more restrictive than the regime $(0,\frac{2}{L})$ where gradient descent is known to monotonically decrease to an optimal value. Thus, one may wonder whether this restriction is necessary, or perhaps Theorem~\ref{thm:main} can be extended for a broader range of step-sizes. In the following theorem, we prove that precisely this restriction is indeed necessary -- namely, for any step size \(\eta\in (\frac{1.75}{L}, \frac{2}{L})\), gradient descent will converge and the optimization curve will monotonically decrease, without necessarily being convex:

\begin{theorem}
\label{thm:non_conv_regime}
    For every \(L > 0\) there exists a convex \(L\)-smooth function \(f:\mathbb{R}\to\mathbb{R}\) and \(x_0\in\mathbb{R}\) such that for every step-size \(\eta\in (\frac{1.75}{L}, \frac{2}{L})\), the mapping \(n\mapsto f(x_n)\) is \emph{not} convex. 
\end{theorem}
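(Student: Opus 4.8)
The plan is to construct an explicit one-dimensional piecewise-quadratic function and an explicit starting point, and then exhibit three consecutive iterates $x_0, x_1, x_2$ for which $f(x_2) - f(x_1) < f(x_1) - f(x_0)$, for every $\eta$ in the target range. By rescaling, it suffices to handle $L = 1$: given a counterexample $f$ for $L=1$, the function $g(x) = L \cdot f(x)$ is $L$-smooth, convex, and gradient descent on $g$ with step size $\eta/L$ traces out the same (vertically rescaled) curve, so non-convexity is preserved and the step-size range $(\tfrac{1.75}{L}, \tfrac{2}{L})$ maps to $(1.75, 2)$. So I fix $L=1$ and look for a convex $1$-smooth $f : \mathbb{R} \to \mathbb{R}$.

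The key idea is that the boundary case $\eta = 7/4$ is exactly where the slack term $\left(\tfrac{7}{8L} - \tfrac{\eta}{2}\right)\norm{\grad f(x_1) - \grad f(x_0)}^2$ in the proof of Theorem~\ref{thm:main} vanishes; for $\eta > 7/4$ this term is strictly negative, and we want it to dominate. This suggests using a quadratic $f(x) = \tfrac12 x^2$ restricted to a region — where $f'$ is $1$-Lipschitz and gradient descent has the simple form $x_{k+1} = (1-\eta) x_k$ — but modified so that one of the three points lands somewhere the local curvature is smaller (flatter), which slows the decrease between $x_1$ and $x_2$ relative to the decrease between $x_0$ and $x_1$. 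Concretely I would take $f$ to agree with $\tfrac12 x^2$ on an interval containing $x_0$ and $x_1$, and to have a gentler slope (a ``kink'' where the derivative's slope drops below $1$, or even becomes flat) in the region where $x_2$ lands, while keeping $f$ globally convex and $1$-smooth — e.g. $f(x) = \tfrac12 x^2$ for $x \le a$, $f$ affine with the matching slope for $x \in [a, b]$, and then resuming quadratic growth for $x \ge b$ if needed to keep $f$ real-valued everywhere. Choosing $x_0 > 0$ with $x_1 = (1-\eta)x_0 < 0$ (note $1 - \eta \in (-1, -\tfrac34)$, so $|x_1| < |x_0|$), I can arrange that $x_1$ sits in the affine (flat-gradient) region, so that $f(x_1) - f(x_2)$ is controlled by the small gradient there, while $f(x_0) - f(x_1)$ is the full quadratic drop; then $f(x_0) - f(x_1) > f(x_1) - f(x_2)$, i.e. the curve is not convex at $n = 1$.

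The main obstacle — and where care is needed — is making a single function and single $x_0$ work \emph{simultaneously} for all $\eta \in (1.75, 2)$, since the iterates $x_1 = (1-\eta)x_0$ and $x_2$ move continuously with $\eta$ and sweep out a range of positions. I would handle this by choosing the flat (or low-curvature) region wide enough that $x_1(\eta)$ stays inside it for the whole interval of $\eta$, and by verifying the strict inequality $f(x_2) - f(x_1) < f(x_1) - f(x_0)$ as an explicit (piecewise) function of $\eta$ on $(1.75, 2)$, checking it holds with room to spare at $\eta = 1.75$ and $\eta \to 2$ and is monotone or otherwise controlled in between. A secondary check is that the function remains genuinely convex and $1$-smooth across all the pieces (slopes nondecreasing, each slope-of-slope in $[0,1]$), and that I have accounted for the possibility that $x_2$ itself re-enters the quadratic region — if so, I simply compute $f(x_2)$ from whichever piece contains it. I would also double-check the induction reduction: since Theorem~\ref{thm:non_conv_regime} only asks that $n \mapsto f(x_n)$ fail to be convex, violating the three-term inequality at one location suffices, so no further iterates need to be analyzed.
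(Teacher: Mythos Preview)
Your overall framework matches the paper's: reduce to $L=1$, use a one-dimensional piecewise quadratic/affine function, and exhibit a single violation $f(x_0)-f(x_1)<f(x_1)-f(x_2)$. The rescaling $g=Lf$ with step $\eta/L$ is different from the paper's $g(t)=f(\sqrt{L}\,t)$ but equally valid.

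However, the heart of your plan has the inequality reversed. Convexity of $n\mapsto f(x_n)$ means the drops $f(x_n)-f(x_{n+1})$ are \emph{non-increasing}; to violate it you need the \emph{second} drop to be larger, i.e.\ $f(x_0)-f(x_1)<f(x_1)-f(x_2)$. Your opening sentence states this correctly, but your construction then aims for the opposite: you place $x_1$ in a flat region so that ``$f(x_1)-f(x_2)$ is controlled by the small gradient there'' and conclude ``$f(x_0)-f(x_1)>f(x_1)-f(x_2)$, i.e.\ the curve is not convex.'' That inequality is the \emph{convex} one. A small gradient at $x_1$ makes the step $x_1\to x_2$ short and the second drop small, which only reinforces convexity.

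The mechanism that actually works (and that the paper uses) is different: keep $x_0$ in the full-curvature quadratic region and let the overshoot carry $x_1$ into an affine piece where the gradient is \emph{capped} (not tiny). The capped gradient at $x_1$ then shortens the step just enough that $x_2$ lands unusually close to the minimizer, so $f(x_2)$ is very small and the second drop $f(x_1)-f(x_2)$ is \emph{large}. Concretely, the paper takes $f(t)=\tfrac12 t^2$ for $t\le 1$ and $f(t)=t-\tfrac12$ for $t>1$, with $x_0=-1.8$; then $x_1=1.8(\eta-1)>1.35$ lies in the affine piece, $x_2=x_1-\eta\in(-0.2,\,-0.05)$ lands near $0$, and the desired inequality reduces to $1.75<\eta<14$. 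Your plan needs to flip which drop you are trying to enlarge and reposition the affine region accordingly.
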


\begin{proof}
    We note that without loss of generality, it suffices to prove the theorem for \(L=1\). To see this, suppose our theorem holds for \(L=1\). This means that there exists a convex \(1\)-smooth function \(f:\mathbb{R}\to\mathbb{R}\) and \(x_0\in\mathbb{R}\), such that for every step-size \(\eta\in (1.75, 2)\), the optimization curve of gradient descent on \(f\) with initial point \(x_0\) and a constant step-size \(\eta\) isn't convex. Now, define \(g:\mathbb{R}\to\mathbb{R}\) such that for every \(t\in\mathbb{R}\): \(g(t)=f(\sqrt{L}t)\), define \(\tilde{x}_0=\frac{x_0}{\sqrt{L}}\) and define \(\tilde{\eta}=\frac{\eta}{L}\). The reader may verify that \(g\) is convex and \(L\)-smooth. Now, for \(n\in\mathbb{N}\), define
    \[x_n=x_{n-1}-\eta f'(x_{n-1}),\;\;\tilde{x}_n=\tilde{x}_{n-1}-\tilde{\eta}g'(\tilde{x}_{n-1})~.\]
    Arguing by induction, we claim that for \(n\in\mathbb{Z}_{\ge 0}\) it holds that \(\tilde{x}_n=\frac{x_n}{\sqrt{L}}\). Indeed, the base case \(n=0\) holds by definition, and if the claim holds for \(n\in\mathbb{Z}_{\ge 0}\), then
    \[
    \begin{split}
        \tilde{x}_{n+1}&=\tilde{x}_n-\tilde{\eta}g'(\tilde{x}_n)=\frac{x_n}{\sqrt{L}}-\frac{\eta}{L}\sqrt{L}f'(\sqrt{L}\frac{x_n}{\sqrt{L}}) \\
        &=\frac{1}{\sqrt{L}}(x_n-\eta f'(x_n))=\frac{x_{n+1}}{\sqrt{L}}~.
    \end{split}
    \]
    It obviously follows by the definition of \(g\) that \(g(\tilde{x}_n)=f(x_n)\) for all \(n\in\mathbb{Z}_{\ge 0}\). Thus, the optimization curve corresponding to \(g,\tilde{x}_0\) and \(\tilde{\eta}\) is the same as the optimization curve corresponding to \(f, x_0\) and \(\eta\) - which shows why it suffices to prove the theorem for \(L=1\).\\
    
    We now turn to prove the theorem for $L=1$. Specifically, consider the function \(f:\mathbb{R}\to\mathbb{R}\) defined as
    \[
        f(t)=
        \begin{cases}
            \frac{1}{2}t^2, & t \leq 1\\
            t-\frac{1}{2}, & t > 1~.
        \end{cases}
    \]
    It is fairly obvious and easy to check that \(f\) is indeed a convex \(1\)-smooth function. 
    
    Now, set \(x_0=-1.8\) and let there be \(\eta \in (1.75,2)\). We have
    \[x_1=x_0-\eta f'(x_0)=(\eta-1)(-x_0)=1.8(\eta-1)~,\]
    and due to the fact that \(\eta>1.75\), we have \(x_1>0.75\cdot1.8=1.35\). This implies that
    \[x_2=x_1-\eta f'(x_1)=x_1-\eta=-1.8+0.8\eta<-1.8+0.8\cdot2=-0.2~,\]
    where in the last equality we recalled that \(\eta < 2\). Thus:
    \[
    \begin{split}
       & f(x_0)-f(x_1) < f(x_1)-f(x_2) \iff \frac{1}{2}x_0^2-(x_1-\frac{1}{2}) <  (x_1-\frac{1}{2}) - \frac{1}{2}x_2^2 \\
       & \iff 0.5(-1.8)^2-(1.8\eta-1.8-0.5)<(1.8\eta-1.8-0.5)-0.5(-1.8+0.8\eta)^2~.
    \end{split}
    \]
    Rearranging the terms, we get the equivalent inequality
    \(\eta^2-15.75\eta+24.5<0\), 
    which is easily verified to be equivalent to 
    \(1.75<\eta<14\). This holds due to the assumption that \(\eta \in (1.75,2)\). Thus,
    \(f(x_0)-f(x_1) < f(x_1)-f(x_2)\)
    holds, which means that the relevant optimization curve is not convex. This concludes our proof.
\end{proof}

So far, we have considered the convexity of the optimization curve induced by gradient descent. A related property of interest is how the sequence of gradient norms \(\left\{\norm{\grad{f}(x_n)}\right\}_{n=0}^\infty\) behave. It is well-known (see \citet{nesterov2012make}) that for convex smooth functions, the \emph{minimal} value after $N$ iterations, namely \(\min_{n\leq N}\norm{\grad{f}(x_n)}\), is upper bounded by a function monotonically decreasing to $0$ (as should be expected when the method converges to a global minimum). However, does the sequence \(\left\{\norm{\grad{f}(x_n)}\right\}_{n=0}^\infty\) itself decay monotonically? The following theorem establishes that this is true, for any $\eta$ in the regime where gradient descent converges: 
\begin{theorem}
\label{thm:norm}
    Let \(f:\mathbb{R}^n\to\mathbb{R}\) be a convex \(L\)-smooth function, and let $\{x_n\}_{n=0}^{\infty}$ be the sequence of iterates produced by gradient descent, starting from some $x_0\in \mathbb{R}^n$, using a step size \(\eta\in(0,\frac{2}{L}]\). 
    Then the sequence \(\left\{\norm{\grad{f}(x_n)}\right\}_{n=0}^\infty\) is non-increasing.
\end{theorem}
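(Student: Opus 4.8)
The plan is to mirror the reduction used in the proof of Theorem~\ref{thm:main}: arguing by induction and using that $x_0$ is arbitrary, it suffices to prove $\norm{\grad{f}(x_1)}\le\norm{\grad{f}(x_0)}$. Write $g_0=\grad{f}(x_0)$ and $g_1=\grad{f}(x_1)$. The gradient descent recursion gives $x_1-x_0=-\eta g_0$ and $x_2-x_1=-\eta g_1$, so that $\eta\norm{g_1}=\norm{x_2-x_1}$ and $\eta\norm{g_0}=\norm{x_1-x_0}$; since $\eta>0$, the claim is therefore equivalent to $\norm{x_2-x_1}\le\norm{x_1-x_0}$ — i.e.\ to the gradient-step map $x\mapsto x-\eta\grad{f}(x)$ being non-expansive on the pair $\{x_0,x_1\}$, because $x_2=x_1-\eta\grad{f}(x_1)$ and $x_1=x_0-\eta\grad{f}(x_0)$.

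To establish this, I would use the identity $x_2-x_1=(x_1-x_0)-\eta(g_1-g_0)$, so that
\[
\norm{x_2-x_1}^2=\norm{x_1-x_0}^2-2\eta\inner{x_1-x_0}{g_1-g_0}+\eta^2\norm{g_1-g_0}^2,
\]
and then invoke co-coercivity of the gradient of a convex $L$-smooth function, $\inner{x_1-x_0}{g_1-g_0}\ge\frac{1}{L}\norm{g_1-g_0}^2$. This last inequality follows directly from \citet[Theorem 2.1.5]{nesterov2018lectures} (already used in the proof of Theorem~\ref{thm:main}): apply that bound to the ordered pair $(x_0,x_1)$ and to $(x_1,x_0)$ and add the two resulting inequalities. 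Substituting into the display yields
\[
\norm{x_2-x_1}^2\le\norm{x_1-x_0}^2+\eta\Bigl(\eta-\frac{2}{L}\Bigr)\norm{g_1-g_0}^2,
\]
and since $\eta\le\frac{2}{L}$ the extra term is non-positive, giving $\norm{x_2-x_1}\le\norm{x_1-x_0}$, hence $\norm{g_1}\le\norm{g_0}$.

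I do not anticipate a genuine obstacle: the crux is simply the observation that the ratio of consecutive gradient norms equals the ratio of consecutive displacements under the gradient-step map, which reduces the theorem to the classical non-expansiveness of $I-\eta\grad{f}$ for $\eta\le\frac{2}{L}$. Unlike Theorem~\ref{thm:main}, no careful choice of combination coefficients or completion of squares is needed, and the sharp threshold $\frac{2}{L}$ (rather than $\frac{1.75}{L}$) drops out of the sign of $\eta-\frac{2}{L}$ automatically. The only degenerate case, $\grad{f}(x_0)=0$, is trivial, since then all subsequent iterates coincide with $x_0$.
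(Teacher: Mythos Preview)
Your proof is correct and, in fact, cleaner than the paper's. Both arguments rest on the same co-coercivity inequality \(\inner{\grad{f}(x)-\grad{f}(y)}{x-y}\ge\frac{1}{L}\norm{\grad{f}(x)-\grad{f}(y)}^2\) (which the paper also cites directly from \citet[Theorem 2.1.5]{nesterov2018lectures}, so your derivation by symmetrization is unnecessary but harmless). The difference is in how that inequality is used. The paper substitutes \(x_1-x_0=-\eta\grad{f}(x_0)\) into co-coercivity, expands in terms of \(\grad{f}(x_0)\) and \(\grad{f}(x_1)\), and arrives at
\[
\left(\tfrac{2}{L}-\eta\right)\inner{\grad{f}(x_1)}{\grad{f}(x_0)}\ge\left(\tfrac{1}{L}-\eta\right)\norm{\grad{f}(x_0)}^2+\tfrac{1}{L}\norm{\grad{f}(x_1)}^2,
\]
from which it proceeds by a case split on \(\eta=\tfrac{2}{L}\) versus \(\eta<\tfrac{2}{L}\), applying Cauchy--Schwarz in the second case, factoring the result, and finishing by contradiction. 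Your route --- reinterpreting the desired inequality as \(\norm{x_2-x_1}\le\norm{x_1-x_0}\) and reading off non-expansiveness of \(I-\eta\grad{f}\) from a single squared-norm expansion --- avoids Cauchy--Schwarz, the case split, and the contradiction entirely; the threshold \(\tfrac{2}{L}\) appears transparently as the sign of \(\eta(\eta-\tfrac{2}{L})\). What your approach buys is brevity and conceptual clarity (it identifies the result with a classical operator-theoretic fact); what the paper's approach buys is perhaps a more self-contained manipulation that stays purely at the level of gradient norms without introducing \(x_2\).
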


\begin{proof}
    Arguing by induction, and using the fact that $x_0$ is arbitrary, it suffices to show that \(\norm{\grad{f}(x_0)}\geq\norm{\grad{f}(x_1)}\).
    
    Using a standard inequality for convex \(L\)-smooth functions (see \citet[Theorem 2.1.5]{nesterov2018lectures}), we have that for all \(x,y\in\mathbb{R}^n\):
    \begin{equation}
        \inner{\grad{f}(x)-\grad{f}(y)}{x-y}\geq\frac{1}{L}\norm{\grad{f}(x)-\grad{f}(y)}^2~. \label{smooth_2}
    \end{equation}
     
    Plugging \(x_0,x_1\) into (\ref{smooth_2}), we have
    \begin{align}
        & \inner{\grad{f}(x_1)-\grad{f}(x_0)}{x_1-x_0}\geq\frac{1}{L}\norm{\grad{f}(x_1)-\grad{f}(x_0)}^2 \notag\\
        \iff & \inner{\grad{f}(x_1)-\grad{f}(x_0)}{-\eta\grad{f}(x_0)}\geq\frac{1}{L}\norm{\grad{f}(x_1)-\grad{f}(x_0)}^2 \notag\\
        \iff & -\eta\left(\inner{\grad{f}(x_1)}{\grad{f}(x_0)}-\norm{\grad{f}(x_0)}^2\right) \geq \frac{1}{L}\left(\norm{\grad{f}(x_1)}^2-2\inner{\grad{f}(x_1)}{\grad{f}(x_0)}+\norm{\grad{f}(x_0)}^2\right) \notag\\
        \iff & (\frac{2}{L}-\eta)\inner{\grad{f}(x_1)}{\grad{f}(x_0)} \geq (\frac{1}{L}-\eta)\norm{\grad{f}(x_0)}^2+\frac{1}{L}\norm{\grad{f}(x_1)}^2~. \label{norm_ineq1}
    \end{align}  
    Now, if \(\eta=\frac{2}{L}\), then by substituting for \(\eta\) and rearranging the terms, we get from (\ref{norm_ineq1}) that
    \[\frac{1}{L}\norm{\grad{f}(x_0)}^2 \geq \frac{1}{L}\norm{\grad{f}(x_1)}^2~,\]
    which obviously implies the desired result.
    Otherwise, we have \(\frac{2}{L}-\eta > 0\) by assumption. Using Cauchy-Schwarz, (\ref{norm_ineq1}) implies \[(\frac{2}{L}-\eta)\norm{\grad{f}(x_1)}\cdot\norm{\grad{f}(x_0)} \geq (\frac{1}{L}-\eta)\norm{\grad{f}(x_0)}^2+\frac{1}{L}\norm{\grad{f}(x_1)}^2~.\]
    Rearranging the terms, we get the equivalent inequality
    \begin{equation}
        \left(\norm{\grad{f}(x_0)}-\norm{\grad{f}(x_1)}\right)\cdot\left((\eta-\frac{1}{L})\norm{\grad{f}(x_0)}+\frac{1}{L}\norm{\grad{f}(x_1)}\right) \geq 0 ~.\label{norm_ineq2}
    \end{equation}
    Now, if \(\norm{\grad{f}(x_1)}=0\) then we are obviously done. Otherwise, assume for the sake of contradiction that \(\norm{\grad{f(x_1)}}>\norm{\grad{f(x_0)}}\). Then, splitting to the cases \(\eta\in\left(0,\frac{1}{L}\right)\) and \(\eta\in\left[\frac{1}{L},\frac{2}{L}\right)\), it is easy to verify that
    \[\left((\eta-\frac{1}{L})\norm{\grad{f}(x_0)}+\frac{1}{L}\norm{\grad{f}(x_1)}\right) > 0~.\]
    Together with \(\norm{\grad{f}(x_0)}-\norm{\grad{f}(x_1)}<0\) this results in
    \[\left(\norm{\grad{f}(x_0)}-\norm{\grad{f}(x_1)}\right)\cdot\left((\eta-\frac{1}{L})\norm{\grad{f}(x_0)}+\frac{1}{L}\norm{\grad{f}(x_1)}\right) < 0~,\]
    which contradicts (\ref{norm_ineq2}).
    
    Therefore, in any case, we have \(\norm{\grad{f}(x_0)}\geq\norm{\grad{f}(x_1)}\) as required.
\end{proof}

\section{Gradient Flow}

In this section we extend our results to the optimization curve induced by gradient flow, and prove that it is always convex (when optimizing smooth convex functions). 

First, we recall that given a function \(f\in C^1(\mathbb{R}^n,\mathbb{R})\) (namely, differentiable  with a continuous gradient), and some \(x_0\in\mathbb{R}^n\), the gradient flow $(x(t))_{t\geq 0}$ is defined by the differential equation
\[
    \begin{cases}
        x'(t) = -\grad{f}(x(t))\\
        x(0) = x_0~.
    \end{cases}
\]
Assuming $L$-smoothness of \(f\) for some $L<\infty$ is a sufficient condition for this differential equation to have a unique, well-defined solution globally on all of $t\geq 0$ (this follows from the Picard-Lindel{\"o}f theorem). We can then define the optimization curve induced by gradient flow as follows:
\begin{definition}
    Given a function \(f\in C^1(\mathbb{R}^n,\mathbb{R})\) for which gradient flow (starting from some $x_0$) is uniquely defined, the associated optimization curve \(g:[0,\infty)\to\mathbb{R}\) is defined as \(g(t)=f(x(t))\) for all \(t\in[0,\infty)\). 
\end{definition}

Before discussing the case of smooth convex functions, let us first consider the simpler case of convex \(C^2\) functions. In that case, one may verify that there exists a unique solution defined for \(t \geq 0\) for the gradient flow differential equation (again, this can be shown to follow from the Picard-Lindel{\"o}f theorem and the fact that the gradient norm is decreasing, due to the function's smoothness and convexity). Moreover, in that case convexity of the optimization curve is easy to establish:

\begin{theorem}
\label{thm:c2_gf}
    Let \(f:\mathbb{R}^n\to\mathbb{R}\) be a convex function such that \(f\in C^2(\mathbb{R}^n,\mathbb{R})\) and \(x_0\in\mathbb{R}^n\). Then the resulting gradient flow optimization curve is convex.
\end{theorem}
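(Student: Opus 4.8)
The plan is to simply differentiate the optimization curve $g(t)=f(x(t))$ twice and read off convexity from the positive semidefiniteness of the Hessian. First I would record the regularity needed: since $f\in C^2$, its gradient $\grad f$ is $C^1$, and the gradient flow equation $x'(t)=-\grad f(x(t))$ then shows (by bootstrapping) that $x'$ is itself $C^1$, so $x(\cdot)$ is $C^2$ on $[0,\infty)$; composing with $f\in C^2$ gives $g\in C^2$. Hence it suffices to show $g''(t)\ge 0$ for all $t\ge 0$, which by Lemma~\ref{lem:int_convexity} (or directly, the standard second-derivative characterization of convexity) yields convexity of $g$.

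Next I would carry out the two differentiations using the chain rule. For the first derivative,
\[
g'(t)=\inner{\grad{f}(x(t))}{x'(t)}=-\norm{\grad{f}(x(t))}^2~,
\]
using the flow equation. Differentiating once more, and using the flow equation again to substitute $x'(t)=-\grad f(x(t))$,
\[
g''(t)=-2\inner{\grad{f}(x(t))}{\grad^2{f}(x(t))\,x'(t)}=2\inner{\grad{f}(x(t))}{\grad^2{f}(x(t))\,\grad{f}(x(t))}~.
\]
Finally, since $f$ is convex and $C^2$, its Hessian $\grad^2 f(x(t))$ is positive semidefinite at every point, so the right-hand side is $\ge 0$. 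Thus $g''\ge 0$ on $[0,\infty)$, so $g$ is convex, as claimed.

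I do not expect a serious obstacle here; the only points requiring a little care are (i) justifying that the flow is globally defined and $C^2$ in $t$ — but, as noted in the text preceding the statement, global existence follows from Picard--Lindel\"of together with the fact that $\norm{\grad f(x(t))}$ is non-increasing along the flow (which is itself visible from $\tfrac{d}{dt}\norm{\grad f(x(t))}^2 = -g''(t)\le 0$), so $x'$ stays bounded and cannot blow up in finite time — and (ii) being careful that differentiation of $t\mapsto\grad f(x(t))$ under the chain rule is legitimate, which is exactly where $f\in C^2$ is used. Everything else is a routine two-line computation.
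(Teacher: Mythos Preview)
Your proposal is correct and follows essentially the same approach as the paper's proof: compute $g'(t)=-\norm{\grad f(x(t))}^2$ and $g''(t)=2\inner{\grad f(x(t))}{\grad^2 f(x(t))\,\grad f(x(t))}$ via the chain rule and the flow equation, then invoke positive semidefiniteness of the Hessian. The paper is slightly terser about the regularity and global-existence remarks you spell out in (i) and (ii), but the argument is the same.
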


\begin{proof}
    Recall that the optimization curve is defined as \(g(t)=f(x(t))\) for all $t\geq 0$. By elementary results, a sufficient condition for convexity of \(g\) is that it is twice differentiable, with a non-negative second derivative.

    By the definition of the gradient flow differential equation, we have for all \(t\ge0\):
    \begin{equation}
    \label{gd_eq_c2}
        x'(t) = -\grad{f}(x(t))~.
    \end{equation}
    Therefore, \(x(\cdot)\) is differentiable for all \(t\ge0\). Now, using the chain rule we have:
    \begin{equation}
    \label{gd_c2_oc_d}
        g'(t)=\inner{\grad{f}\left(x(t)\right)}{x'(t)} \overset{(\ref{gd_eq_c2})}{=}-\norm{\grad{f}\left(x(t)\right)}^2~.
    \end{equation}
    Owing to the fact that \(f\in C^2(\mathbb{R}^n,\mathbb{R})\), we can differentiate (\ref{gd_c2_oc_d}) again using the chain rule to get:
    \begin{equation}
    \label{gd_c2_oc_d2}
        g''(t)=-2\inner{\grad{f}\left(x(t)\right)}{H_{f}\left(x(t)\right)x'(t)} \overset{(\ref{gd_eq_c2})}{=} 2\inner{\grad{f}\left(x(t)\right)}{H_{f}\left(x(t)\right)\grad{f}\left(x(t)\right)}
    \end{equation}
    where \(H_{f}\) is the hessian matrix of \(f\). We note that \(f\) is convex and \(f\in C^2(\mathbb{R}^n,\mathbb{R})\). Therefore, for all \(z\in\mathbb{R}^n\) it hold that \(H_{f}(z)\) is positive semi-definite (see \citet[Theorem 2.1.4]{nesterov2018lectures}). Specifically, for all \(t\ge0\),
    \[\left(\grad{f}\left(x(t)\right)\right)^TH_{f}\left(x(t)\right)\grad{f}\left(x(t)\right) \geq 0\]
    and therefore by (\ref{gd_c2_oc_d2}), 
    \(g''(t)\geq0\)
    for all \(t\ge0\). This implies that \(g\) is twice differentiable on \([0,\infty)\), and that its second derivative is non-negative, which concludes the proof.
\end{proof}

We now turn to discuss the case of convex and smooth functions (which are not necessarily twice-differentiable). The theorem below implies that the resulting optimization curve is always convex:
\begin{theorem}
\label{thm:main_gf}
    Let \(f:\mathbb{R}^n\to\mathbb{R}\) be a convex \(L\)-smooth function for some $L\in(0,\infty)$, and \(x_0\in\mathbb{R}^n\). Then the resulting gradient flow optimization curve is convex.
\end{theorem}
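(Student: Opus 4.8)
The plan is to reduce the claim to the continuous-time analogue of Theorem~\ref{thm:norm}, namely that $t\mapsto\norm{\grad f(x(t))}$ is non-increasing along the gradient flow. Indeed, since $f\in C^1$ and $x(\cdot)$ is differentiable directly from the ODE, the chain rule gives $g'(t)=\inner{\grad f(x(t))}{x'(t)}=-\norm{\grad f(x(t))}^2$ for all $t\ge 0$; note that this identity, unlike~(\ref{gd_c2_oc_d2}), requires no twice-differentiability. Hence if $\norm{\grad f(x(t))}$ is non-increasing, then so is $\norm{\grad f(x(t))}^2$, so $g'$ is non-decreasing, and a differentiable function on an interval with non-decreasing derivative is convex. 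Thus everything rests on monotonicity of the gradient norm.

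To obtain this, I would use the fact that the gradient flow of a convex function is a \emph{nonexpansive} semiflow. Concretely, if $u(\cdot)$ and $v(\cdot)$ both solve $w'=-\grad f(w)$ for the same convex $f$, then $\frac{d}{dt}\tfrac12\norm{u(t)-v(t)}^2=-\inner{\grad f(u(t))-\grad f(v(t))}{u(t)-v(t)}\le 0$, where the inequality is monotonicity of the gradient of a convex function (a special case of~(\ref{smooth_2}), or more simply of the first-order characterization of convexity). Therefore $\norm{u(t)-v(t)}$ is non-increasing in $t$.

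I then combine this with the semigroup property of the flow. Since $f$ is $L$-smooth, the Picard-Lindel{\"o}f theorem guarantees existence and uniqueness, so the solution map $\Phi_t\colon x_0\mapsto x(t)$ satisfies $\Phi_{a+b}=\Phi_a\circ\Phi_b$. Fix $0\le t_1<t_2$ and $h>0$. The points $x(t_2)$ and $x(t_2+h)$ are the images under $\Phi_{t_2-t_1}$ of $x(t_1)$ and $x(t_1+h)$ respectively, so applying the previous paragraph to the two solutions started from $x(t_1)$ and $x(t_1+h)$ yields $\norm{x(t_2+h)-x(t_2)}\le\norm{x(t_1+h)-x(t_1)}$. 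Dividing by $h$ and letting $h\to 0^+$, and using that $x$ is differentiable with $x'(t)=-\grad f(x(t))$, gives $\norm{\grad f(x(t_2))}\le\norm{\grad f(x(t_1))}$, which is what we needed.

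The main obstacle is conceptual rather than computational: because $f$ need not be $C^2$, we cannot mimic the Hessian-based argument of Theorem~\ref{thm:c2_gf} and differentiate $t\mapsto\norm{\grad f(x(t))}^2$ directly, so the monotonicity of the gradient norm must be extracted from the more robust contraction argument above; one must also check that this contraction step uses only convexity (monotonicity of $\grad f$) and that the semigroup property and the $h\to 0^+$ limit are legitimate, all of which follow from $f\in C^1$ together with $L$-smoothness. An alternative route is to approximate the gradient flow on $[0,T]$ by gradient descent with step size $T/N$, invoke Theorem~\ref{thm:norm} for each $N$, and pass to the limit; this works but requires quantitative discretization estimates, whereas the contraction argument does not.
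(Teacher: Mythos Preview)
Your argument is correct, and it takes a genuinely different route from the paper's. The paper proceeds by \emph{discretization}: it shows that the continuous-piece Euler curve $t\mapsto f\bigl(x^{(\eta)}(t)\bigr)$ is convex for all $\eta\in(0,1/L]$ (a variant of Theorem~\ref{thm:main}, proved by showing a certain piecewise derivative is non-decreasing), then uses uniform convergence of Euler's approximation to the flow together with the stability of convexity under pointwise limits. In contrast, you work directly in continuous time, first establishing the gradient-flow analogue of Theorem~\ref{thm:norm} via the nonexpansiveness of the flow map $\Phi_t$ (which needs only monotonicity of $\nabla f$), and then reading off convexity of $g$ from $g'(t)=-\norm{\grad f(x(t))}^2$. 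Amusingly, the paper runs the implication in the \emph{opposite} direction: it first proves Theorem~\ref{thm:main_gf} via discretization and only then deduces that $t\mapsto\norm{\grad f(x(t))}$ is non-increasing from the convexity of $g$. Your approach is shorter and avoids both the Euler-approximation machinery and the separate variant Theorem~\ref{thm:main_extension}; the paper's approach, on the other hand, makes the link between the discrete and continuous results explicit and reuses the gradient-descent analysis already in hand. Both are valid; yours is arguably the more elementary of the two.
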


The proof of the theorem (which is a bit involved and appears in the appendix) builds on a variant of the proof of Theorem \ref{thm:main}, and the fact that gradient descent can be seen as a discretization of gradient flow.

Furthermore, from Theorem \ref{thm:main_gf}, one concludes the gradient flow equivalent of Theorem \ref{thm:norm}.

\begin{theorem}
    Let \(f:\mathbb{R}^n\to\mathbb{R}\) be a convex \(L\)-smooth function for some $L\in(0,\infty)$, and \(x_0\in\mathbb{R}^n\). Let \(x(\cdot)\) be the unique solution for the corresponding gradient flow differential equation. Then the map \(t\mapsto \norm{\grad{f}\left(x(t)\right)}\) is non-increasing over \([0,\infty)\)
\end{theorem}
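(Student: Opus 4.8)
The plan is to deduce this immediately from Theorem~\ref{thm:main_gf} together with the derivative identity already derived in the proof of Theorem~\ref{thm:c2_gf}. The key observation is that the gradient flow optimization curve $g(t)=f(x(t))$ is differentiable with $g'(t)=-\norm{\grad f(x(t))}^2$, and a convex differentiable function has a non-decreasing derivative; hence $\norm{\grad f(x(t))}^2$ is non-increasing.

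First I would establish that $g$ is continuously differentiable on $[0,\infty)$ with $g'(t)=-\norm{\grad f(x(t))}^2$. Since $f$ is $L$-smooth, $\grad f$ is (Lipschitz, hence) continuous, so the gradient flow solution $x(\cdot)$ satisfies $x'(t)=-\grad f(x(t))$ and is therefore $C^1$; note this step does \emph{not} require $f\in C^2$, unlike in Theorem~\ref{thm:c2_gf}. Applying the chain rule to $g=f\circ x$ (valid because $f$ is differentiable and $x(\cdot)$ is differentiable) gives
\[
g'(t)=\inner{\grad f\left(x(t)\right)}{x'(t)}=-\norm{\grad f\left(x(t)\right)}^2
\]
for all $t\ge 0$, exactly as in (\ref{gd_c2_oc_d}).

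Next, by Theorem~\ref{thm:main_gf} the curve $g$ is convex on $[0,\infty)$. An elementary property of convex functions is that a differentiable convex function has a non-decreasing derivative; applying this to $g$ shows that $t\mapsto g'(t)=-\norm{\grad f(x(t))}^2$ is non-decreasing. Equivalently, $t\mapsto\norm{\grad f(x(t))}^2$ is non-increasing on $[0,\infty)$, and since $s\mapsto\sqrt{s}$ is monotone on $[0,\infty)$, the map $t\mapsto\norm{\grad f(x(t))}$ is non-increasing as well, which is the claim.

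I do not expect any genuine obstacle here: the only points requiring a little care are (i) that the derivative formula $g'(t)=-\norm{\grad f(x(t))}^2$ holds under mere $L$-smoothness (not $C^2$), which follows from continuity of $\grad f$ and the chain rule as above, and (ii) invoking the standard fact that the derivative of a convex differentiable function is monotone, rather than arguing via the second derivative as was possible in the $C^2$ case. With Theorem~\ref{thm:main_gf} in hand, the argument is a two-line deduction.
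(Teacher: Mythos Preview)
Your proposal is correct and follows essentially the same approach as the paper: invoke Theorem~\ref{thm:main_gf} to get convexity of $g(t)=f(x(t))$, compute $g'(t)=-\norm{\grad f(x(t))}^2$ via the chain rule (using only $C^1$, not $C^2$), and use that a differentiable convex function has a non-decreasing derivative. The paper's proof is exactly this two-line deduction, with the only cosmetic difference being that it does not explicitly spell out the final square-root step.
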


\begin{proof}
     Recall that the optimization curve is defined as \(g(t)=f(x(t))\) for all $t\geq 0$. From Theorem~\ref{thm:main_gf}, \(g\) is convex, and by the monotonicity of gradients of convex functions, it follows that \(g'\) is non-decreasing over \([0,\infty)\). From the definition of the gradient flow differential equation, for \(t\in[0,\infty)\) it holds that:
     \[x'(t)=-\grad{f}\left(x(t)\right)~.\]
     Therefore, by the chain rule, for \(t\in[0,\infty)\):
     \[g'(t)=-\norm{\grad{f}\left(x(t)\right)}^2~.\]
     Hence, it follows that \(t\mapsto -\norm{\grad{f}\left(x(t)\right)}^2\) is non-decreasing on \(t\in[0,\infty)\), and the result follows.
\end{proof}

\section{Conclusion and Discussion}
In this paper, we studied the convexity of optimization curves induced by gradient descent, focusing on convex \(L\)-smooth functions. For step sizes that assure monotonic convergence -- merely \(\eta\in(0,\frac{2}{L})\), we identified the regime that ensures convexity of the induced optimization curve, namely \(\eta\in(0,\frac{1.75}{L}]\). Moreover, we showed that in the complementary regime of \((\frac{1.75}{L},\frac{2}{L})\), the induced optimization curve may not be convex, even though the gradient descent still converges monotonically to a minimum. In addition, we used the close connection between gradient descent and gradient flow to show that the optimization curve induced by gradient flow is convex as well. Finally, we studied the closely-related question of whether the gradient norms decrease, and showed that here there are no separate regimes: For both gradient descent and gradient flow, the gradient norms decrease monotonically.

Our work leaves several open questions and directions for future research. For example, the proof of non-convexity in Theorem~\ref{thm:non_conv_regime} relies on showing lack of convexity between two consecutive steps - i.e, we showed that \(f(x_0)-f(x_1)<f(x_1)-f(x_2)\). One may ask whether there exist cases where more steps fail to be convex - i.e, \(f(x_n)-f(x_{n+1})<f(x_{n+1})-f(x_{n+2})\) holds for multiple \(n\in\mathbb{Z}_{\ge0}\), or whether the optimization curve might be concave for several consecutive steps. Another obvious direction for research is to consider other optimization settings where the optimization curve is convex or non-convex, or consider this question for other optimization methods.

\bibliography{bibliography}

\begin{thebibliography}{4}
\providecommand{\natexlab}[1]{#1}
\providecommand{\url}[1]{\texttt{#1}}
\expandafter\ifx\csname urlstyle\endcsname\relax
  \providecommand{\doi}[1]{doi: #1}\else
  \providecommand{\doi}{doi: \begingroup \urlstyle{rm}\Url}\fi

\bibitem[Bartle \& Sherbert(2011)Bartle and Sherbert]{bartle2011real}
Robert~Gardner Bartle and Donald~R Sherbert.
\newblock \emph{Introduction to Real Analysis}.
\newblock Wiley, 4th edition, 2011.

\bibitem[Iserles(1996)]{iserles1996}
Arieh Iserles.
\newblock \emph{A First Course in the Numerical Analysis of Differential Equations}.
\newblock Cambridge University Press, 1996.

\bibitem[Nesterov(2012)]{nesterov2012make}
Yurii Nesterov.
\newblock How to make the gradients small.
\newblock \emph{Optima. Mathematical Optimization Society Newsletter}, \penalty0 (88):\penalty0 10--11, 2012.

\bibitem[Nesterov(2018)]{nesterov2018lectures}
Yurii Nesterov.
\newblock \emph{Lectures on convex optimization}, volume 137.
\newblock Springer, 2nd edition, 2018.

\end{thebibliography}
\bibliographystyle{tmlr}

\newpage

\appendix

\section{Proof of Theorem \ref{thm:main_gf}}
In order to prove theorem~\ref{thm:main_gf}, we recall the well-known fact that gradient descent can be seen as a discrete approximation of gradient flow, namely the solution of its differential equation:
\begin{definition}
    Let \(f\in C^1(\mathbb{R}^n,\mathbb{R})\) be a convex function, and \(x_0\in\mathbb{R}^n\) such that the gradient flow differential equation has a unique solution \(x(\cdot)\) (as a function on  \([0,\infty)\)). Let \(\eta > 0\) be fixed. \\
    As in gradient descent, for \(n\in\mathbb{N}\) define: \(x_n=x_{n-1}-\eta\grad{f}(x_{n-1})\).\\
    Define Euler's approximation with step-size \(\eta\) to be \(x^{(\eta)}:[0,\infty)\to\mathbb{R}^n\) such that for all \(t\in [0,\infty)\),
    \[x^{(\eta)}(t)=x_{\left\lfloor \frac{t}{\eta}\right\rfloor}-\left(t-\left\lfloor \frac{t}{\eta}\right\rfloor\eta\right)\grad{f}\left(x_{\left\lfloor \frac{t}{\eta}\right\rfloor}\right)\]    
\end{definition}

The reader may have already observed that in fact, the points generated by gradient descent with step-size \(\eta>0\) are exactly the points \(\left\{x^{(\eta)}(n\eta)\right\}_{n=0}^{\infty}\) where  \(x^{(\eta)}(\cdot)\) is the corresponding Euler's approximation. In other words - gradient descent with step-size \(\eta>0\), is the Euler discretization (with step-size \(\eta\)) associated with the gradient flow differential equation.

Another important property we will need is that as $\eta$ converges to $0$, the Euler approximation generally converges to the solution of the differential equation. Proofs of this fact may be found in most textbooks on differential equations. However, for the sake of completeness we shall present such a proof, adapted to our specific assumptions and needs, and similar to the one that appears in \citet[Theorem 1.1]{iserles1996}:
\begin{theorem}
\label{thm:euler}
    Let \(L>0\), \(f:\mathbb{R}^n\to\mathbb{R}\) be a convex \(L\)-smooth function and \(x_0\in\mathbb{R}^n\). For every \(\eta>0\) let \(x^{(\eta)}(\cdot)\) be the corresponding Euler's approximation, and let \(x(\cdot)\) be the unique solution for the corresponding gradient flow differential equation. Then for every \(R>0\)
    \[x^{(\eta)}\overset{\eta \to 0}{\longrightarrow} x\]
    uniformly on \([0,R]\). 
\end{theorem}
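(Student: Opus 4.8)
The plan is to prove the standard convergence-of-Euler's-method result, specializing the textbook argument (e.g.\ \citet[Theorem 1.1]{iserles1996}) to our setting, where we have the luxury of two extra facts: the vector field $-\grad{f}$ is globally $L$-Lipschitz, and along the trajectory (and along the iterates) the gradient norm stays bounded. So first I would observe that $\norm{\grad{f}(x(t))}$ is non-increasing in $t$ (this is the gradient-flow content of an earlier theorem, or follows directly since $\frac{d}{dt}\norm{\grad f(x(t))}^2 = -2\grad f(x(t))^\top H_f \grad f(x(t)) \le 0$ in the $C^2$ case and by smoothness in general), and similarly $\norm{\grad{f}(x_n)}$ is non-increasing by Theorem~\ref{thm:norm}. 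Hence both the continuous solution and all the Euler iterates (for all $\eta \le 2/L$, say, which is all we care about as $\eta \to 0$) remain in the ball $B = \{x : \norm{x - x_0} \le R + R\norm{\grad f(x_0)}\}$ for $t \in [0,R]$, or more simply one gets an a priori bound $\norm{x^{(\eta)}(t)} , \norm{x(t)} \le \norm{x_0} + R\,\norm{\grad f(x_0)}$ on $[0,R]$; on this bounded region everything is uniformly controlled by $L$ and $M := \norm{\grad f(x_0)}$.

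Next I would set up the error recursion in the usual way. Fix $R>0$ and $\eta>0$ with $\eta < R$; write $e_n = \norm{x^{(\eta)}(n\eta) - x(n\eta)}$ for $n\eta \le R$. Using the integral form $x((n+1)\eta) = x(n\eta) - \int_{n\eta}^{(n+1)\eta}\grad f(x(s))\,ds$ versus the Euler step $x^{(\eta)}((n+1)\eta) = x^{(\eta)}(n\eta) - \eta\,\grad f(x^{(\eta)}(n\eta))$, subtract and use the triangle inequality: the ``consistency'' term is $\norm{\int_{n\eta}^{(n+1)\eta}(\grad f(x(s)) - \grad f(x(n\eta)))\,ds} \le L\int_{n\eta}^{(n+1)\eta}\norm{x(s)-x(n\eta)}\,ds \le L \cdot \tfrac{\eta^2}{2}\cdot M$ (since $\norm{x'}\le M$ on $[0,R]$), while the ``stability'' term is $\eta\,\norm{\grad f(x(n\eta)) - \grad f(x^{(\eta)}(n\eta))}\le \eta L e_n$. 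This yields $e_{n+1} \le (1+\eta L)e_n + \tfrac{1}{2}L M \eta^2$ with $e_0 = 0$. The discrete Grönwall inequality then gives $e_n \le \tfrac{1}{2}LM\eta^2 \cdot \tfrac{(1+\eta L)^n - 1}{\eta L} \le \tfrac{M\eta}{2}(e^{nL\eta} - 1) \le \tfrac{M\eta}{2}(e^{LR}-1)$ for all $n$ with $n\eta \le R$, which $\to 0$ uniformly over such $n$ as $\eta \to 0$.

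Finally I would upgrade from convergence at the grid points $n\eta$ to uniform convergence on the whole interval $[0,R]$. For arbitrary $t\in[0,R]$, let $n = \lfloor t/\eta\rfloor$; then both $\norm{x^{(\eta)}(t) - x^{(\eta)}(n\eta)} \le \eta\norm{\grad f(x_n)} \le \eta M$ and $\norm{x(t)-x(n\eta)} \le \int_{n\eta}^{t}\norm{\grad f(x(s))}\,ds \le \eta M$, so $\norm{x^{(\eta)}(t)-x(t)} \le e_n + 2\eta M$, which is $\le \tfrac{M\eta}{2}(e^{LR}-1) + 2\eta M \to 0$ uniformly in $t\in[0,R]$. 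That completes the argument.

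The main obstacle is really just bookkeeping: one must be careful that the a priori bounds used to control $\norm{x'}$ and to stay in a region where $L$-smoothness is the operative Lipschitz constant are genuinely uniform over all small $\eta$ and over $t\in[0,R]$, and that the interpolation points of $x^{(\eta)}$ between grid nodes are handled (they are, via the gradient-norm bound). Since $-\grad f$ is globally $L$-Lipschitz on all of $\mathbb{R}^n$, there is actually no need to localize to a ball at all — the Lipschitz and Grönwall estimates go through verbatim with the constant $L$ everywhere — which makes this cleaner than the generic textbook statement. Nothing here is deep; the only place to be attentive is not to introduce a circular dependence (we only use Theorem~\ref{thm:norm} and the elementary decay of $\norm{\grad f(x(t))}$, both already established, not Theorem~\ref{thm:main_gf} itself).
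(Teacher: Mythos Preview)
Your approach is essentially the paper's: set up the one-step error recursion $\norm{e_{n+1}}\le(1+L\eta)\norm{e_n}+C\eta^2$, apply discrete Gr\"onwall to get $\norm{e_n}\le C'\eta\,e^{LR}$, and then bridge from grid points to all of $[0,R]$. The only substantive difference is in how you obtain the bound on $\norm{x'(t)}=\norm{\grad f(x(t))}$ along the continuous trajectory. The paper simply uses compactness: $x(\cdot)$ is continuous on $[0,R+1]$, hence bounded, hence $\grad f\circ x$ is bounded there by some constant $K$. You instead invoke the monotone decay of $\norm{\grad f(x(t))}$ to get the explicit constant $M=\norm{\grad f(x_0)}$.

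Be careful with that move. In the paper's logical order, the decay of $\norm{\grad f(x(t))}$ along gradient flow is the \emph{last} result proved, obtained as a corollary of the convexity of the gradient-flow optimization curve (Theorem~\ref{thm:main_gf}), which in turn relies on the present theorem. So calling it ``already established'' is circular as written. Your $C^2$ computation does not cover the merely $L$-smooth case, and ``by smoothness in general'' needs an actual argument---for instance, monotonicity of $\grad f$ makes the flow non-expansive, so $t\mapsto\norm{x(t+h)-x(t)}$ is non-increasing for each fixed $h>0$; dividing by $h$ and letting $h\downarrow 0$ gives the claim. Alternatively, and this is what the paper does, just drop the device entirely and pull the Lipschitz bound on $x(\cdot)$ from compactness; nothing else in your argument changes, and you then avoid importing Theorem~\ref{thm:norm} as well (the paper handles the off-grid points by the same recursion estimate rather than by bounding $\norm{\grad f(x_n)}$).
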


\begin{proof}
    Fix \(R>0\) and \(0<\eta<1\). \(f\) is \(L\)-smooth, and thus \(x(\cdot)\) (as a solution to the corresponding gradient flow differential equation) is uniquely defined on \([0,\infty)\). Moreover, from the definition of gradient flow, for all \(t\in[0,\infty)\) it holds that:
    \[x'(t)=-\grad{f}(x(t))~.\]
    Therefore, \(x(\cdot)\) is differentiable on \([0,\infty)\), and therefore continuous on \([0,\infty)\). Now, by the fact that \(f\) is \(L\)-smooth (and particularly \(\grad{f}\in C(\mathbb{R}^n,\mathbb{R}^n)\)), it holds that the map \(t\mapsto-\grad{f}(x(t))\) is continuous on \([0,\infty)\), meaning that \(x\in C^1\left([0,\infty),\mathbb{R}^n\right)\) (and particularly \(x\in C\left([0,\infty),\mathbb{R}^n\right)\)). Therefore, \(x(\cdot)\) is bounded on \([0,R+1]\) - there exists \(M>0\) such that \(\norm{x(t)}\le M\) for all \(t\in [0,R+1]\). 
    
    Now, from the fact that \(\grad{f}\in C(\mathbb{R}^n,\mathbb{R}^n)\), it holds that \(\grad{f}\) is bounded on \(\{w\in\mathbb{R}^n:\norm{w}\le M\}\). It therefore holds that \(x(\cdot)\) has a bounded derivative on \([0,R+1]\), and therefore is Lipschitz on \([0,R+1]\) - i.e there exists \(K>0\) such that for all \(t_1,t_2\in[0,R+1]\),
    \[\norm{x(t_1)-x(t_2)}\le K\left|t_1-t_2\right|~.\]
    
    Now, for \(n\in\mathbb{Z}_{\ge 0}\) denote \(e_n = x^{(\eta)}(n\eta)-x(n\eta)\). By definition of the differential equation and of Euler's approximation, it holds that \(e_0=0\). Denote \(n^*=\left\lfloor \frac{R}{\eta} \right\rfloor + 1\). Because \(0<\eta < 1\), it holds that \(\eta\cdot n^* \le R+1\). Fixing some \(n\in \{0,\hdots,n^*-1\}\), we have
    \[
    \begin{split}
        e_{n+1} & = x^{(\eta)}((n+1)\eta)-x((n+1)\eta) = \left( x^{(\eta)}(n\eta)-\eta\grad{f}(x^{(\eta)}(n\eta)) \right) - \left( x(n\eta)-\int^{(n+1)\eta}_{n\eta}\grad{f}(x(s))ds \right) \\
        & = e_n + \int^{(n+1)\eta}_{n\eta}\grad{f}(x(s))ds - \eta\grad{f}(x^{(\eta)}(n\eta)) = e_n + \int^{(n+1)\eta}_{n\eta}\left[\grad{f}(x(s))-\grad{f}(x^{(\eta)}(n\eta))\right]ds~,
    \end{split}
    \]
    and so
    \[
    \begin{split}
        \norm{e_{n+1}} & \le \norm{e_n} + \int^{(n+1)\eta}_{n\eta}{\norm{\grad{f}(x(s))-\grad{f}(x^{(\eta)}(n\eta))}ds} \le \norm{e_n} + L\int^{(n+1)\eta}_{n\eta}{\norm{x(s)-x^{(\eta)}(n\eta)}ds} \\
        & \le \norm{e_n} + L\int^{(n+1)\eta}_{n\eta}{\left(\norm{x(s)-x(n\eta)} + \norm{x(n\eta) - x^{(\eta)}(n\eta)}\right)ds} \\
        & = (1+L\eta)\norm{e_n} + L\int^{(n+1)\eta}_{n\eta}{\norm{x(s)-x(n\eta)}ds} \\
        & \le (1+L\eta)\norm{e_n} + LK\int^{(n+1)\eta}_{n\eta}\lvert s-n\eta\rvert ds \\
        & = (1+L\eta)\norm{e_n} + \frac{LK}{2}\eta^2~.
    \end{split}
    \]
    Thus, arguing by induction, we get that for \(n\in\{0,\hdots,n^*\}\),
    \[\norm{e_n}\le \frac{LK}{2}\eta^2\sum^{n-1}_{j=0}(1+L\eta)^j~.\]
    Furthermore, a similar calculation implies that in fact, for every \(t\in [0,R]\),
    \[\norm{x^{(\eta)}(t)-x(t)} \le (1+L\eta)\norm{e_{\left\lfloor \frac{t}{\eta} \right\rfloor}} + \frac{LK}{2}\eta^2~,\]
    and thus for every \(t\in [0,R]\),
    \[
    \begin{split}
        \norm{x^{(\eta)}(t)-x(t)} & \le \frac{LK}{2}\eta^2\sum^{\left\lfloor \frac{t}{\eta} \right\rfloor}_{j=0}(1+L\eta)^j \le \frac{LK}{2}\eta^2\sum^{n^*-1}_{j=0}(1+L\eta)^j =\frac{LK}{2}\eta^2\frac{(1+L\eta)^{n^*}-1}{L\eta} \\
        & \le \frac{K\eta}{2}(1+L\eta)^{n^*} \overset{(*)}{\le} \frac{K\eta}{2}e^{L\eta n^*} \overset{(**)}{\le} \frac{K\eta}{2}e^{L(R+1)}~.
    \end{split}
    \]
    In the above, \((*)\) is due to the fact that for all \(s\in\mathbb{R}\) it holds that \(1+s\le e^s\), and \((**)\) is due to \(\eta\cdot n^* \le R+1\). Now, we notice that:
    \[\frac{K\eta}{2}e^{L(R+1)} \overset{\eta\to 0}{\longrightarrow}0~,\]
    which concludes our proof.
\end{proof}

To apply this result in the context of optimization curves, we have the following immediate corollary:

\begin{corollary}
\label{cor:euler}
    Let \(L>0\), \(f:\mathbb{R}^n\to\mathbb{R}\) be a convex \(L\)-smooth function and \(x_0\in\mathbb{R}^n\). For every \(\eta>0\) let \(x^{(\eta)}(\cdot)\) be the corresponding Euler's approximation, and let \(x(\cdot)\) be the unique solution for the corresponding gradient flow differential equation. Then for all \(t \geq 0\):
    \[f\left(x^{(\eta)}(t)\right)\overset{\eta \to 0}{\longrightarrow}f\left(x(t)\right)\]
\end{corollary}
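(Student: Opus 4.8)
The plan is to read off the corollary directly from Theorem~\ref{thm:euler}, using only the fact that an $L$-smooth function is continuous. Fix $t\geq 0$ and apply Theorem~\ref{thm:euler} with $R=t$ (or indeed any $R\geq t$): this gives $x^{(\eta)}\to x$ uniformly on $[0,R]$ as $\eta\to 0$, and in particular, evaluating at the single point $t$, we obtain $x^{(\eta)}(t)\to x(t)$ in $\mathbb{R}^n$.

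Next I would note that an $L$-smooth function is differentiable on all of $\mathbb{R}^n$, hence continuous. Composing the continuous map $f$ with the convergence $x^{(\eta)}(t)\to x(t)$ yields $f\left(x^{(\eta)}(t)\right)\to f\left(x(t)\right)$ as $\eta\to 0$. To be careful about the limit over the continuous parameter $\eta$: given $\varepsilon>0$, continuity of $f$ at $x(t)$ supplies $\delta>0$ with $\norm{y-x(t)}<\delta\implies \lvert f(y)-f(x(t))\rvert<\varepsilon$, while Theorem~\ref{thm:euler} supplies $\eta_0>0$ with $\norm{x^{(\eta)}(t)-x(t)}<\delta$ for all $\eta\in(0,\eta_0)$; combining the two gives $\lvert f(x^{(\eta)}(t))-f(x(t))\rvert<\varepsilon$ for all such $\eta$, which is exactly the claimed convergence.

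There is essentially no obstacle in this argument: the whole content is carried by Theorem~\ref{thm:euler}, and what remains is merely continuity of $f$, which is immediate from $L$-smoothness. The only point worth stressing is that uniform convergence on $[0,R]$ is much stronger than what is needed here — pointwise convergence at the given $t$ already suffices, and continuity of $f$ finishes the job.
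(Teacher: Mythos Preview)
Your proof is correct and follows the same approach as the paper: invoke Theorem~\ref{thm:euler} to obtain pointwise convergence $x^{(\eta)}(t)\to x(t)$, then conclude by continuity of $f$. The paper's argument is just a terser version of yours.
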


\begin{proof}
    Theorem~\ref{thm:euler} obviously implies that for all \(t \geq 0\) we have:
     \[x^{(\eta)}(t)\overset{\eta \to 0}{\longrightarrow}x(t)~.\]
     The result now follows from the continuity of \(f\).
\end{proof}

Now, we would like to use the convergence established by this corollary, along with the results of Theorem~\ref{thm:main}, to prove that the optimization curve of gradient flow is convex as well. However, one should note that Theorem~\ref{thm:main} establishes the convexity of the optimization curve of \emph{gradient descent} (which is defined to be the linear interpolation between the points \(\left\{(n,f(x_n))\right\}^{\infty}_{n=0}\)), rather than the convexity of the mapping \(t\mapsto f(x^{(\eta)}(t))\) discussed in the corollary. Therefore, we shall need a  variant of Theorem~\ref{thm:main}, which concerns the convexity of the mapping \(t\mapsto f(x^{(\eta)}(t))\). For technical reasons, we will utilize a rather different proof technique to prove the result. The aforementioned variant is formalized as follows:

\begin{theorem}
\label{thm:main_extension}
    Let \(f:\mathbb{R}^n\to\mathbb{R}\) be a convex \(L\)-smooth function, and let \(x_0\in\mathbb{R}^n\). For every \(\eta>0\) let \(x^{(\eta)}(\cdot)\) be Euler's approximation of the corresponding gradient flow differential equation with step size \(\eta\). Then for all \(\eta\in(0,\frac{1}{L}]\) the map \(t\mapsto f\left(x^{(\eta)}(t)\right)\) is convex (over \([0,\infty)\)).
\end{theorem}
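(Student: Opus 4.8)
The plan is to exploit the explicit piecewise structure of Euler's approximation: on each interval $[n\eta,(n+1)\eta)$ we have $x^{(\eta)}(t)=x_n-(t-n\eta)\grad f(x_n)$, so the map $t\mapsto f(x^{(\eta)}(t))$ restricted to this interval is simply $f$ composed with an affine line, hence automatically convex there (the restriction of a convex function to a line is convex). Consequently the only way global convexity can fail is through a downward jump of the derivative at one of the ``kink'' points $t=n\eta$, so the whole argument reduces to controlling the one-sided derivatives at these points.

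Concretely, write $g_n:=\grad f(x_n)$. On $[n\eta,(n+1)\eta)$ one has $\frac{d}{dt}f(x^{(\eta)}(t))=-\inner{\grad f(x^{(\eta)}(t))}{g_n}$, so the right derivative of the curve at $t=n\eta$ equals $-\norm{g_n}^2$, whereas the left derivative at $t=n\eta$ (inherited from the previous interval) equals $-\inner{g_n}{g_{n-1}}$, since $x^{(\eta)}(t)\to x_{n-1}-\eta g_{n-1}=x_n$ as $t\uparrow n\eta$. Hence the derivative of the curve is non-decreasing across every kink if and only if
\[
\inner{g_n}{g_{n-1}}\ \ge\ \norm{g_n}^2\qquad\text{for all }n\ge1 .
\]

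Establishing this inequality is, I expect, the main obstacle, and it is exactly where the hypothesis $\eta\le 1/L$ is used. Applying the co-coercivity inequality (\ref{smooth_2}) to the pair $(x_n,x_{n-1})$ and substituting $x_n-x_{n-1}=-\eta g_{n-1}$, a short rearrangement gives
\[
(2-L\eta)\inner{g_n}{g_{n-1}}\ \ge\ \norm{g_n}^2+(1-L\eta)\norm{g_{n-1}}^2 .
\]
When $\eta\le 1/L$ the coefficient $1-L\eta$ is non-negative, so I may invoke the gradient-norm monotonicity of Theorem~\ref{thm:norm} (valid for all $\eta\in(0,2/L]$) to replace $\norm{g_{n-1}}^2$ by the smaller $\norm{g_n}^2$; since $2-L\eta>0$ as well, dividing through yields $\inner{g_n}{g_{n-1}}\ge\norm{g_n}^2$. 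The insight is that co-coercivity alone does not suffice — one must also use Theorem~\ref{thm:norm} — and that $1-L\eta\ge0$ is precisely what lets the two facts combine; this is also what forces the step-size threshold down to $1/L$, as the piecewise-linear interpolant can amplify non-convexity relative to the integer samples treated in Theorem~\ref{thm:main}.

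Finally I would glue the pieces together. Let $h(t)$ denote the right derivative of $t\mapsto f(x^{(\eta)}(t))$; it is non-decreasing on each $[n\eta,(n+1)\eta)$ (convexity of $f$ along the segment) and does not decrease across any $n\eta$ by the inequality above, so $h$ is non-decreasing on all of $[0,\infty)$, and Riemann integrable on bounded intervals since it is monotone. Because $f\in C^1$ and $x^{(\eta)}$ is $C^1$ on each closed subinterval $[n\eta,(n+1)\eta]$, the fundamental theorem of calculus on each piece together with telescoping gives $f(x^{(\eta)}(t))=f(x_0)+\int_0^t h(s)\,ds$ for all $t\ge0$, whence convexity follows from Lemma~\ref{lem:int_convexity}. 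The remaining work — continuity of the curve at the kinks, the degenerate case $g_n=0$, and the telescoping bookkeeping — is routine.
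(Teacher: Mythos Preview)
Your proof is correct and follows the same architecture as the paper: piecewise convexity on each interval $[n\eta,(n+1)\eta)$ (since $f$ restricted to an affine line is convex), reduction of global convexity to the kink inequality $\inner{g_n}{g_{n-1}}\ge\norm{g_n}^2$, and then gluing via Lemma~\ref{lem:int_convexity} applied to the (piecewise continuous, non-decreasing) derivative.

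The only difference is in how the kink inequality is established. The paper first observes that an $L$-smooth function is also $(1/\eta)$-smooth whenever $\eta\le 1/L$, so it suffices to treat the single case $\eta=1/L$. In that case your own displayed inequality $(2-L\eta)\inner{g_n}{g_{n-1}}\ge\norm{g_n}^2+(1-L\eta)\norm{g_{n-1}}^2$ collapses to $\inner{g_n}{g_{n-1}}\ge\norm{g_n}^2$ directly, with no appeal to Theorem~\ref{thm:norm}. So your remark that ``co-coercivity alone does not suffice --- one must also use Theorem~\ref{thm:norm}'' is not quite right: the paper's reduction shows that co-coercivity alone \emph{does} suffice, and your detour through gradient-norm monotonicity, while valid, is avoidable. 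The paper's route is slightly cleaner; yours has the mild advantage of handling all $\eta\in(0,1/L]$ uniformly without a preliminary reduction.
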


\begin{proof}
    By the definition of \(L\)-smoothness, if follows that \(f\) is also \(L'\)-smooth for all \(L'\geq L\). Therefore, for all \(\eta\in(0,\frac{1}{L}]\) it holds that \(f\) is \(\frac{1}{\eta}\)-smooth. Therefore,  without loss of generality, it is sufficient to prove the result for \(\eta=\frac{1}{L}\).

    Looking at \(x^{(\frac{1}{L})}\), it is easy to notice by its definition that for all \(n\in\mathbb{Z}_{\ge 0}\), \(x^{(\frac{1}{L})} \in C^1(\left[\frac{n}{L},\frac{n+1}{L}\right],\mathbb{R}^n)\). Thus, it holds that \(x^{(\frac{1}{L})}\in C^1_{PW}([0,\infty),\mathbb{R}^n)\) - \(x^{(\frac{1}{L})}\) is a piece-wise \(C^1\) curve on \([0,\infty)\). Thus, by the chain rule and the fact that \(f\in C^1(\mathbb{R}^n,\mathbb{R})\), it holds that the mapping \(t\mapsto f\left(x^{(\frac{1}{L})}(t)\right)\) is in \(C^1(\left[\frac{n}{L},\frac{n+1}{L}\right],\mathbb{R})\) for every \(n\in\mathbb{Z}_{\ge 0}\), and thus, it is a \(C^1\) piece-wise function on \([0,\infty)\).

    Now, define \(g:[0,\infty)\to\mathbb{R}\) as
    \[g(t)=\inner{~\grad{f}\left(x^{(\frac{1}{L})}\left(n_t/L\right)-\left(t-n_t/L\right)\grad{f}\left(x^{(\frac{1}{L})}\left(n_t/L\right)\right)\right)~}{~-\grad{f}\left(x^{(\frac{1}{L})}\left(n_t/L\right)\right)~}~,\]
    where we denote \(n_t=\left\lfloor tL\right\rfloor\).
    It is easily verified that wherever the map \(t\mapsto f\left(x^{(\frac{1}{L})}(t)\right)\) is differentiable, its derivative is exactly \(g(t)\). Considering the fact that this mapping is differentiable at \([0,\infty)\setminus\{\frac{n}{L}\}_{n=0}^\infty\) and that \(g\) is easily seen to be a piece-wise continuous function (and thus it is Riemann integrable on every closed sub-interval of \([0,\infty)\)), we have by the Newton-Leibniz theorem (see the exact form in \citet[Theorem 7.3.1]{bartle2011real}) that 
    \[f\left(x^{(\frac{1}{L})}(t)\right)-f(x_0)=\int^{t}_{0}g(s)ds\]
    for all $t\geq 0$. Now, by lemma~\ref{lem:int_convexity}, proving that \(g\) is non-decreasing shall suffice, as it shall imply that the map \(t\mapsto f\left(x^{(\frac{1}{L})}(t)\right)\) is convex over \([0,\infty)\).

    To that end, we shall prove the following:
    \begin{enumerate}[label=(\roman*), ref=(\roman*)]
        \item \label{first} For all \(n\in\mathbb{Z}_{\ge 0}\), \(g\) is non decreasing on \(\left[\frac{n}{L},\frac{n+1}{L}\right)\).
        \item \label{second} For all \(n\in\mathbb{N}\), \(\lim_{t\to \frac{n}{L}^-}g(t)\le g(\frac{n}{L})\).
    \end{enumerate}
    Along with the fact that \(g\) is piece-wise continuous (with discontinuities at \(\{\frac{n}{L}\}_{n=0}^\infty\)), these two properties are easily seen to imply that \(g\) is non-decreasing. Thus, proving them will conclude the proof.

    Thus, we now turn to prove \ref{first}. Let there be \(n\in\mathbb{Z}_{\ge 0}\) and let there be \(t_1,t_2\in\left[\frac{n}{L},\frac{n+1}{L}\right)\) such that \(t_2>t_1\). Denote \(y(t)=x^{(\frac{1}{L})}\left(\frac{n}{L}\right)-\left(t-\frac{n}{L}\right)\grad{f}\left(x^{(\frac{1}{L})}\left(\frac{n}{L}\right)\right)\) for \(t\in\left[\frac{n}{L},\frac{n+1}{L}\right)\). Then
    \[
        (t_2-t_1)(g(t_2)-g(t_1))=\inner{\grad{f}(y(t_2))-\grad{f}(y(t_1))}{y(t_2)-y(t_1)}\geq0~,
    \]
    where the inequality follows from convexity of \(f\) (and the fact that it is in \(C^1(\mathbb{R}^n,\mathbb{R})\)). It obviously follows that \(g(t_2)\geq g(t_1)\), as desired.

    Turning to prove \ref{second}, let there be \(n\in\mathbb{N}\) and denote \(z_0=x^{(\frac{1}{L})}\left(\frac{n-1}{L}\right),\;z_1=x^{(\frac{1}{L})}\left(\frac{n}{L}\right)\). Obviously we have:
    \[\lim_{t\to \frac{n}{L}^-}g(t)=\inner{\grad{f}\left(z_0-\frac{1}{L}\grad{f}\left(z_0\right)\right)}{-\grad{f}\left(z_0\right)}~.\]
    We now notice the following:
    \[
    \begin{split}
        &g\left(\frac{n}{L}\right)-\left(\lim_{t\to \frac{n}{L}^-}g(t)\right) = \inner{\grad{f}(z_1)}{-\grad{f}(z_1)} - \inner{\grad{f}(z_1)}{-\grad{f}(z_0)} = \inner{\grad{f}(z_1)}{\grad{f}(z_0)-\grad{f}(z_1)}= \\
        &= \inner{\grad{f}(z_1)}{\grad{f}(z_0)-\grad{f}(z_1)} -\inner{\grad{f}(z_0)}{\grad{f}(z_0)-\grad{f}(z_1)} + \inner{-\grad{f}(z_0)}{\grad{f}(z_1)-\grad{f}(z_0)} = \\
        & = \inner{\grad{f}(z_1)-\grad{f}(z_0)}{-\grad{f}(z_0)}-\norm{\grad{f}(z_1)-\grad{f}(z_0)}^2 =\\ 
        &=L\left(\inner{\grad{f}(z_1)-\grad{f}(z_0)}{-\frac{1}{L}\grad{f}(z_0)}-\frac{1}{L}\norm{\grad{f}(z_1)-\grad{f}(z_0)}^2\right)=\\
        & = L\left(\inner{\grad{f}(z_1)-\grad{f}(z_0)}{z_1-z_0}-\frac{1}{L}\norm{\grad{f}(z_1)-\grad{f}(z_0)}^2\right) \geq 0~,
    \end{split}
    \]
    where the inequality holds since \(f\) is convex and \(L\)-smooth (see \cite[Theorem 2.1.5]{nesterov2018lectures}). Thus, \(\lim_{t\to \frac{n}{L}^-}g(t)\le g(\frac{n}{L})\) as desired. As mentioned above, this concludes the proof.

\end{proof}

Now, having these results, we just need one more technical lemma in order to prove theorem~\ref{thm:main_gf}.

\begin{lemma}
\label{lem:convex_convergence}
    Let \(A\subset \mathbb{R}^n\) be a convex set, \(f:A\to\mathbb{R}\) and \(g:(0,\infty) \times A\to\mathbb{R}\) such that:
    \begin{enumerate}
        \item For every \(x\in A\): \(g(s,x)\overset{s \to 0}{\longrightarrow}f(x)\).
        \item There exists \(\delta>0\) such that for all \(s\in(0,\delta)\) the map \(x\mapsto g(s,x)\) is convex over \(A\).
    \end{enumerate}
    Then \(f\) is a convex function.
\end{lemma}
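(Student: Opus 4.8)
The plan is to prove the well-known fact that a pointwise limit of convex functions is convex, which reduces to the observation that non-strict inequalities are preserved under limits. First I would unwind the definition of convexity: it suffices to fix arbitrary $x,y\in A$ and $\lambda\in[0,1]$ and establish
\[
f\bigl(\lambda x+(1-\lambda)y\bigr)\le \lambda f(x)+(1-\lambda)f(y).
\]
Since $A$ is convex, $\lambda x+(1-\lambda)y\in A$, so $g\bigl(s,\lambda x+(1-\lambda)y\bigr)$ is well-defined for every $s>0$, and the three relevant evaluations of $g$ all make sense.

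Next, for each $s\in(0,\delta)$ I would apply hypothesis~2, i.e.\ convexity of the map $x\mapsto g(s,x)$ over $A$, to obtain
\[
g\bigl(s,\lambda x+(1-\lambda)y\bigr)\le \lambda\,g(s,x)+(1-\lambda)\,g(s,y).
\]
Finally I would let $s\to 0$ (along $s\in(0,\delta)$, which is legitimate since $\delta>0$) and invoke hypothesis~1 for each of the three arguments $\lambda x+(1-\lambda)y$, $x$, and $y$: the left-hand side tends to $f(\lambda x+(1-\lambda)y)$ and the right-hand side tends to $\lambda f(x)+(1-\lambda)f(y)$. Because the displayed inequality is weak, it passes to the limit, yielding the desired bound; as $x,y,\lambda$ were arbitrary, $f$ is convex.

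I do not expect any genuine obstacle here: the proof is a one-step limiting argument using nothing beyond the definition of convexity and the order-preservation of limits. The only mild point worth stating explicitly is that the convergence in hypothesis~1 is as $s\to 0^+$, so it remains valid when restricted to the sub-neighbourhood $(0,\delta)$ on which convexity of $g(s,\cdot)$ is available. (Alternatively, the same conclusion follows verbatim by applying the limit to the three-point secant characterization of convexity used in Lemma~\ref{lem:int_convexity}.)
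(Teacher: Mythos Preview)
Your proposal is correct and matches the paper's own proof essentially line for line: fix $x,y\in A$ and $\lambda$, apply convexity of $g(s,\cdot)$ for each $s\in(0,\delta)$, and pass the resulting weak inequality to the limit $s\to 0$ using hypothesis~1. The extra remarks you include (that $\lambda x+(1-\lambda)y\in A$ and that restricting to $s\in(0,\delta)$ is harmless for the limit) are sound and slightly more explicit than the paper's version, but there is no substantive difference.
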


\begin{proof}
    Fix some \(x,y\in A\) and \(\lambda \in (0,1)\). Then for every \(s\in(0,\delta)\), by convexity of \(x\mapsto g(s,x)\), it holds that
    \[g(s,\lambda x + (1-\lambda)y) \le \lambda g(s,x) + (1-\lambda)g(s,y)~.\]
    Now, taking the limit \(s\to 0\) in both sides of the above inequality, and using the first assumption in the lemma statement, we get
    \[f(\lambda x + (1-\lambda)y) \le \lambda f(x) + (1-\lambda)f(y).\]
    Since $x,y,\lambda$ are arbitrary, the result follows by definition of convexity. 
\end{proof}

Now we are ready to prove theorem~\ref{thm:main_gf}.

\begin{proof}[Proof of Theorem~\ref{thm:main_gf}]
    As usual, for every \(\eta>0\) we denote the corresponding Euler's approximation as \(x^{(\eta)}(\cdot)\). By theorem~\ref{thm:main_extension} we get that for every \(\eta\in(0,\frac{1}{L})\), the map \(t\mapsto f\left(x^{(\eta)}(t)\right)\) is convex over \([0,\infty)\). Corollary~\ref{cor:euler} implies that for every \(t>0\),
    \[f\left(x^{(\eta)}(t)\right)\overset{\eta \to 0}{\longrightarrow}f\left(x(t)\right)~.\]
    Combined with Lemma~\ref{lem:convex_convergence}, we get that the map \(t\mapsto f\left(x(t)\right)\) is convex over \([0,\infty)\), as required.
\end{proof}

\end{document}